\def\section{\@startsection{section}{1}%
  \z@{1.1\linespacing\@plus\linespacing}{.8\linespacing}%
  {\normalfont\Large\scshape\centering}}
\theoremstyle{plain}
\newtheorem*{conj*}{Root Groups Conjecture}
\newtheorem*{thm1.2}{(1.2) Theorem}
\newtheorem*{thm1.3}{(1.3) Theorem}
\newtheorem*{thm1.4}{(1.4) Theorem}
\newtheorem*{prop*}{Proposition}
\newtheorem{prop}{Proposition}[section]
\newtheorem{thm}[prop]{Theorem}
\newtheorem{cor}[prop]{Corollary}
\newtheorem{lemma}[prop]{Lemma}
\theoremstyle{definition}
\newtheorem{Def}[prop]{Definition}
\newtheorem*{Def*}{Definition}
\newtheorem{notation}[prop]{Notation}
\newtheorem*{notation*}{Notation}
\newtheorem{remark}[prop]{Remark}
\newtheorem*{MT*}{Main Theorem}
\newcommand{\cala}{\mathcal{A}}
\newcommand{\ff}{\mathbb{F}}
\newcommand{\LL}{\mathbb{L}}
\newcommand{\kk}{\mathbb{K}}
\newcommand{\ga}{\alpha}
\newcommand{\gb}{\beta}
\newcommand{\gc}{\gamma}
\newcommand{\gd}{\delta}
\newcommand{\gl}{\lambda}
\newcommand{\gvp}{\varphi}
\newcommand{\gr}{\rho}
\newcommand{\gs}{\sigma}
\newcommand{\half}{\textstyle{\frac{1}{2}}}
\newcommand{\quar}{\textstyle{\frac{1}{4}}}
\newcommand{\tquar}{\textstyle{\frac{3}{4}}}
\numberwithin{equation}{section}
\begin{document}
\title[half-axes in power associative algebras]{half-axes in power associative algebras}
\author[Yoav Segev]{Yoav Segev}
\address{Yoav Segev \\
         Department of Mathematics \\
         Ben-Gurion University \\
         Beer-Sheva 84105 \\
         Israel}
\email{yoavs@math.bgu.ac.il}

\keywords{half-axis, power associative algebra, Axial algebra, Jordan algebra.}
\subjclass[2010]{Primary: 17A05; Secondary: 17C99, 17B69.}

\begin{abstract}
Let $A$ be a commutative, non-associative algebra over a field $\ff$
of characteristic $\ne 2$.  A half-axis in $A$ is an idempotent $e\in A$
such that $e$ satisfies the Peirce multiplication rules in a Jordan algebra,
and, in addition, the $1$-eigenspace of ${\rm ad}_e$ (multiplication by $e$)
is one dimensional.

In this paper we consider the identities 

\noindent
$(*)$\quad $x^2x^2=x^4$ and $x^3x^2=xx^4.$

We show that if identities $(*)$ hold strictly in $A,$ then one gets (very)
interesting identities between elements in the  eigenspaces of ${\rm ad}_e$
(note that if $|\ff|>3$ and the identities $(*)$ hold in $A,$ then they
hold strictly in $A$). 
Furthermore we prove that if $A$ is a primitive axial algebra
of Jordan type half (i.e., $A$ is generated by half-axes), and the identities $(*)$ hold
strictly in $A,$ 
then $A$ is a Jordan algebra.  
\end{abstract}

\date{Jan.~22, 2018}
\maketitle
\section{Introduction}

Throughout this paper $\ff$ is a field of characteristic not $2$
and $A$ is a commutative non-associative algebra over $\ff$.
Given  an element  $x\in A$ and a scalar $\gl \in \ff,$ 
we denoted:
\[
A_\gl(x) := \{y \in A\ |\ yx = \lambda y\}\,.
\]
(We allow $A_\gl(x)=0$.)
\begin{Def}\label{def half axis}
Let $e\in A,$ and set $Z:=A_0(e)$ and $U:=A_{1/2}(e)$.
We say that $e$ is a {\it half-axis} if and only if
\begin{enumerate}
\item
$e^2=e$ (so $e$ is an idempotent).

\item
$A_1(e)=\ff e.$

\item
$A=\ff e\oplus U\oplus Z.$

\item
$Z^2\subseteq Z,\ U^2\subseteq \ff e+Z$ and $UZ\subseteq U.$ 
\end{enumerate}
\end{Def}
Note that any idempotent $e$ in a Jordan algebra $J$ such that $J_1(e)=\ff e$ is a half-axis.

Recall that $A$ is a {\it primitive axial algebra of Jordan type half}
if $A$ is generated (as an algebra over $\ff$) by half-axes.

We also need the following notation.

\begin{notation}
Let $e\in A$ be a half-axis, and let $x\in A.$
Write $x=\ga e + x_0+x_{1/2},$ with $\ga\in\ff$ and $x_{\gl}\in A_{\gl}(e),$
for $\gl\in\{0,1/2\}$.   
\begin{enumerate}
\item
We denote $\gvp_e(x)=\gd_x:=\ga.$

\item
We denote $z_x:=x_0$.  We call $z_x$ {\it the $Z$-part of $x$}.
\end{enumerate}
Note that $ex=\gd_x e,$ for $x\in A_1(e)+A_0(e).$ 
\end{notation}

Throughout this paper we shall use the technique of
{\it linearization} of identities.  More details about
this technique are given in \S 2.

\setcounter{subsection}{2}
\subsection{Scalar extension and strict validity of identities}
For a field extension $\kk/\ff$, we denote by $A_\kk := A\otimes_\ff \kk$
the scalar extension (or base change) of $A$ from $\ff$ to $\kk$, which is a commutative non-associative
$\kk$-algebra in a natural way. It is well known that Jordan algebras are invariant under
base change (see e.g.~\cite[Linearization Proposition 1.8.5(2), p.~148]{m}), 
so $A$ is a Jordan algebra over $\ff$ if and only if $A_\kk$ is one over $\kk$. Moreover, since
tensor products commute with direct sums, if $e\in A$ is a half-axis,
then $e$ is a half-axis in $A_\kk$.  Since primitive axial algebras of Jordan type
half are {\it spanned} by half-axes  (see \cite[Corollary 1.2, p.~81]{hrs}),
it follows that primitive axial algebras
are stable under base change as well. 
{\it But power-associative algebras
are not}. For this reason, the concept of strict power-associativity comes in: $A$ is called
{\it strictly power-associative} if the scalar extensions $A_\kk$ are power-associative, for all field
extensions $\kk/\ff$. Similarly, an identity is said to {\it hold strictly} in $A$ if it is satisfied not only
by $A$ but by all its scalar extensions. 
\medskip

Our main result is the following theorem.
\begin{MT*}\label{thm axial jordan}
Let $A$ be primitive axial algebra
of Jordan type half generated over $\ff$ (as an algebra) by a set $\cala$ of half-axes.
The following are equivalent.
\begin{itemize}
\item[$(i)$]
$A$ is a Jordan algebra.
\item[$(ii)$]
$A$ is strictly power associative.
\item[$(iii)$]
The identities
\[
x^2x^2=xx^3\qquad\text{and}\qquad x^3x^2=xx^4
\]
hold strictly in $A.$
\item[$(iv)$]
For all $e\in \cala,\ u\in A_{1/2}(e)$ and $z\in A_0(e)$ we have
\begin{itemize}
\item[$(a)$]
$u^3=\gd_{u^2}u.$

\item[$(b)$]
$(uz)z=\half uz^2.$
\end{itemize}
\end{itemize}
\end{MT*}  
The interesting implications of the Main Theorem are of course $(iii)\Rightarrow (iv)$
and $(iv)\Rightarrow (i)$.  The implication $(iii)\Rightarrow (iv)$ is part of Theorem
\ref{thm identities} below, and the implication $(iv)\Rightarrow (i)$ is proven
in \S 3.
\begin{remark}\label{rem f>3}
Throughout this paper we will deal with the identities
\begin{equation}\label{eq identities}
x^2x^2=xx^3\qquad\text{and}\qquad x^3x^2=xx^4.
\end{equation}
Note that  if
$|\ff|>3$ and the identities of equation \eqref{eq identities}  hold in $A,$
then, by Corollary \ref{cor O}(2) below, these identities hold strictly in $A$.
Hence if $|\ff|>3$, and  $A$ is a primitive axial algebra of Jordan type half
such that the identities of equation \eqref{eq identities} hold in $A,$ then all the
equivalent properties of the Main Theorem hold in $A.$
\end{remark}

\setcounter{prop}{3}
\begin{thm}\label{thm identities}
Assume that the identities $x^2x^2=xx^3$ and  $x^3x^2=xx^4$ hold strictly in $A,$
and let $e\in A$ be a half-axis.
Let $u\in A_{1/2}(e)$ and $z\in A_0(e)$.  Then
\begin{enumerate}
\item
$u^3=\gd_{u^2} u.$ 

\item
$(uz)z=\half uz^2.$

\item
$4u^2(uz)=2u(u(uz))+u(u^2z)+u^3z.$

\item
$4(uz)^2+2u^2z^2=u(uz^2) +2(u(uz))z+2u((uz)z)+(u^2z)z.$

\item
$4(uz)z^2=uz^3+(uz^2)z+2((uz)z)z.$

\item
$z_{u(uz)}=\half u^2z.$
\end{enumerate}
\end{thm}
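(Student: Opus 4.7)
\emph{Proof plan.} The plan is to apply the linearization technique to Identity~I, $x^2x^2=xx^3$, at the element $x=e+su+tz$, where $s,t$ are new indeterminates. By strict validity we may work in the scalar extension $A_{\ff(s,t)}$, so each bidegree coefficient of $s^it^j$ in $x^2x^2-xx^3=0$ must vanish separately in $A$; each of the six conclusions will be read off from a specific bidegree $(i,j)$. Using $eu=\half u$, $ez=0$, $u^2=\gd_{u^2}e+z_{u^2}$ together with the fusion rules $Z^2\subseteq Z$, $U^2\subseteq \ff e+Z$, $UZ\subseteq U$, we compute
\[
x^2 = e + su + 2st(uz) + s^2u^2 + t^2z^2,
\]
and then obtain $x^3=x\cdot x^2$ bidegree by bidegree; finally $x^2\cdot x^2$ and $x\cdot x^3$ are expanded in each bidegree of interest and set equal.

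The assignments are as follows. Bidegree $(3,0)$ reduces to $\half\gd_{u^2}u+\thalf u^3=2u^3$, which gives~(1). Bidegree $(1,2)$ reduces to $2uz^2=\half uz^2+3(uz)z$, giving~(2). For~(6), bidegree $(2,1)$ is handled by writing $u(uz)=\pi e+\zeta$ with $\pi\in\ff$ and $\zeta\in Z$ (valid since $U^2\subseteq\ff e+Z$): the $\ff e$-components of the two sides agree automatically, while the $Z$-components yield $2\zeta=u^2z$, i.e., $z_{u(uz)}=\half u^2z$. Bidegrees $(3,1)$, $(2,2)$, $(1,3)$ give~(3), (4), (5) respectively; for~(3) one must additionally invoke~(1) to rewrite $u\,z_{u^2}=u^3-\half\gd_{u^2}u=\half u^3$, which collapses the raw coefficient equation into the stated form.

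The main obstacle is bookkeeping: each bidegree of $x^2\cdot x^2$ and $x\cdot x^3$ is a sum over all compositions of $(i,j)$, with each summand Peirce-decomposed via the relations above, using also $u^2z=z_{u^2}z\in Z$. The delicate points are separating the $\ff e$- and $Z$-components of $u(uz)$ for~(6), and invoking~(1) at the right moment for~(3); all remaining steps are mechanical expansions. Notably, Identity~II is not needed for any of~(1)--(6); only Identity~I is used in this plan.
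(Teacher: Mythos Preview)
Your approach is correct, and for items (1), (3)--(6) it is essentially the paper's linearization method repackaged: the paper linearizes $x^2x^2=xx^3$ successively in directions $y$ and $w$ to obtain two general identities (its equations (2.1) and (2.2)), and then specializes $x,y,w$ to $e,u,z$ in various ways; your single substitution $x=e+su+tz$ in $A_{\ff(s,t)}$ followed by bidegree extraction produces exactly the same specializations. (One small quibble: for bidegree $(3,1)$ no appeal to (1) is needed --- the coefficient identity is already $4u^2(uz)=2u(u(uz))+u(u^2z)+u^3z$ on the nose, without ever unpacking $u^2=\gd_{u^2}e+z_{u^2}$.)

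The genuine difference is item (2). The paper obtains $(uz)z=\half uz^2$ from the \emph{second} identity $x^3x^2=xx^4$, via a chain of three linearizations culminating in a four-variable associator identity that is then specialized at $w=v=e$, $x=z$, $y=u$. Your bidegree-$(1,2)$ computation shows that (2) already follows from the \emph{first} identity alone; in the paper's language this amounts to taking its equation (2.2) with $x=z$, $y=u$, $w=e$, a substitution the paper does not make. So your route is strictly more economical here and shows that, for the purposes of this theorem, the hypothesis $x^3x^2=xx^4$ is redundant. The paper's longer derivation does however produce the fully multilinearized associator identity as a by-product, which it reuses later (e.g.\ in proving $(uz^2)z=\half uz^3$); your method trades that generality for a shorter path to the six stated conclusions.
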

 
\noindent
Theorem \ref{thm identities} is proven in \S 2.

\begin{remark}
When $|\ff|=3,$ and the identities of equation \eqref{eq identities}
hold in $A,$ one can show that identities (1), (4) and (6) of Theorem \ref{thm identities}
remain valid.
It is quite possible that additional (long) calculations will show
that the other identities are valid as well.  Once this is done
it will follow from the implication $(iv)\Rightarrow (i)$ of the Main
Theorem that if $A$ is a primitive axial algebra of Jordan type half
and the identities of equation \eqref{eq identities}  hold in $A,$ then 
$A$ is a Jordan algebra. (Indeed, by Remark \ref{rem f>3}, if $|\ff|>3$
and the identities of equation \eqref{eq identities} hold in $A,$ then they
hold strictly in $A,$ so Theorem \ref{thm identities} applies, and then the
Main Theorem applies.)
\end{remark}

We mention that Theorem \ref{thm jordan} in \S 7  gives interesting
necessary and sufficient conditions for a commutative non-associative algebra $A$
having a half-axis to be a Jordan algebra. See also Theorem \ref{thm jordan 2}.
Furthermore, throughout
this paper we obtain additional various interesting identities. 
 
\begin{notation}\label{not main}
Throughout this paper we let  $e$ be a fixed half-axis in $A$.
We denote
\[
U:=A_{1/2}(e)\quad\text{and}\quad Z=A_0(e).
\]
\end{notation}

To conclude the introduction we would like to mention that
this paper was inspired by \cite{tb}.

\section{The proof of Theorem \ref{thm identities}}

In this section we adopt the notation and terminology of \cite{o}.
Thus we consider non-associative and non-commutative polynomials $f(x_1,\dots, x_m)$
over $\ff$.  The degree of $x_i$ in each monomial of $f$ is defined
on p.~167 of \cite{o} and $f$ is {\it homogeneous} 
if, for each $i\in\{1,\dots,m\}$, the degree of $x_i$ is the same in all of the
monomials of $f$.  

Recall also from p.~176 of \cite{o} the notion of {\it linearization} of $f$ 
and the notion of {\it  a stable derivative} of $f$. Note that {\it all stable derivatives
of $f$ contain all linearizations of $f$}.  A {\it homogeneous identity}
is an identity $f(x_1,\dots,x_m)=0,$ where $f$ is a homogeneous polynomial.
We will not distinguish between the polynomial $f$ and the identity $f=0.$

Corollary \ref{cor O} below is well known.  However, since
we did not find an explicit reference to it, we include a proof.
First we quote the following two results from \cite{o}.

\begin{thm}[Theorem 3.5 in \cite{o}]\label{thm O}
Let $\LL$ be a field with $|\LL|\ge d$, let $f(x_1,\dots,x_m)$ be a
homogeneous identity over $\LL,$ such that the degree of each $x_i, i=1,\dots,m,$ in $f$ is no
more than $d$.  Let $A$ be a $\LL$-algebra satisfying
$f$. Then $A$ satisfies all stable derivatives of $f$.
\end{thm}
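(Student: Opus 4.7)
The plan is to reduce the statement to a single atomic step of \emph{partial linearization}, and then iterate. Every stable derivative of $f$ can be obtained by a finite sequence of operations of the following form: replace some variable $x_i$ (of degree $d_i$ in the current polynomial, with $d_i\le d$) by a sum $y+z$ of two fresh variables, and retain one of the resulting multi-homogeneous components (of some prescribed bi-degree $(k,d_i-k)$ in $y,z$). I will show that this atomic step preserves the property of being an identity of $A$; induction on the number of steps then gives the theorem.

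To carry out the atomic step, assume that $h(x_1,\ldots,x_m)$ is a homogeneous identity of $A$ of degree $d_i\le d$ in $x_i$. For a scalar $\lambda\in\LL$, substitute $x_i\mapsto y+\lambda z$ with $y,z$ new variables. Expanding and grouping by the degree in $z$ gives
\[
h(\ldots,y+\lambda z,\ldots)\;=\;\sum_{k=0}^{d_i}\lambda^{k}\,g_{k}(\ldots,y,z,\ldots),
\]
where $g_k$ is homogeneous of degree $k$ in $z$ and degree $d_i-k$ in $y$; in particular $g_0$ and $g_{d_i}$ are the two ``trivial'' components, equal to $h$ with $x_i$ replaced by $y$ and by $z$ respectively, and the remaining $g_k$ are exactly the non-trivial bi-graded components produced by the atomic step.

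Since $h$ is an identity, the left hand side vanishes on $A$ for every specialization of $y,z$ and the other variables, and for every $\lambda\in\LL$. Fixing arbitrary elements of $A$ for the free variables, we obtain a polynomial $P(\lambda)=\sum_{k=0}^{d_i}\lambda^{k}g_k$ (with coefficients in $A$) of degree at most $d_i\le d$ that vanishes at every point of $\LL$. The hypothesis $|\LL|\ge d$ provides enough distinct scalars so that the Vandermonde matrix attached to these scalars is invertible; solving the linear system forces each coefficient $g_k$ to be zero in $A$. Thus every bi-graded component obtained in the atomic step is itself an identity of $A$, and because each $g_k$ is again homogeneous with per-variable degree at most $d$ (in fact at most $d_i\le d$ in each of $y,z$), the induction hypothesis can be applied to the next atomic step.

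The main obstacle I expect is not the Vandermonde argument itself but a bookkeeping issue: matching the formal definition of \emph{stable derivative} used in~\cite{o} to the iterative procedure above, and checking that every stable derivative is produced by some finite sequence of atomic steps of the kind described (this is where the word ``stable'' is used, to ensure that no cancellation destroys the component one wants to extract). Once the definitional unfolding is done, the proof reduces to the one-variable Vandermonde inversion and a straightforward induction on the number of partial linearizations performed.
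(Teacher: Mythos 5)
The paper does not prove this statement at all: it is quoted verbatim from Osborn \cite[Theorem 3.5]{o}, so there is no in-paper argument to compare yours against. Your proposal is the standard partial-linearization/interpolation argument, which is indeed the method behind Osborn's theorem, and the overall structure (induction on atomic linearization steps, each justified by letting a scalar $\lambda$ vary over $\LL$) is the right one.

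There is, however, a concrete off-by-one gap in the interpolation step, and it sits exactly where the hypothesis $|\LL|\ge d$ is sharp. You want to conclude that all $d_i+1$ coefficients of $P(\lambda)=\sum_{k=0}^{d_i}\lambda^k g_k$ vanish from the fact that $P$ vanishes at every $\lambda\in\LL$. That requires $d_i+1$ distinct evaluation points, but the hypothesis only guarantees $|\LL|\ge d\ge d_i$; in the boundary case $|\LL|=d=d_i$ the Vandermonde system you invoke is underdetermined, and the conclusion genuinely fails for abstract polynomials (e.g.\ $\prod_{a\in\LL}(\lambda-a)$ vanishes at every point of a finite field without being the zero polynomial). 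The repair uses an observation you already made but did not exploit: $g_0$ and $g_{d_i}$ are just $h$ with $x_i$ renamed, hence already identities of $A$. Subtracting them gives $\sum_{k=1}^{d_i-1}\lambda^k g_k=0$ for all $\lambda$, and dividing by $\lambda\ne 0$ leaves a polynomial of degree at most $d_i-2$ vanishing at the $|\LL|-1\ge d_i-1$ nonzero scalars, which is now enough points for the Vandermonde argument. With that repair (and the definitional bookkeeping you flag, i.e.\ checking that Osborn's stable derivatives are exactly what your iteration produces), the argument closes; as literally written, your version only proves the theorem under the stronger hypothesis $|\LL|\ge d+1$.
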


\begin{prop}\label{prop O}
Let $S$ be a set of homogeneous identities over a field $\kk,$
and let $V$ be the variety of
$\kk$-algebras determined by $S$. Let $A \in V$ satisfy every stable derivative
of each identity in $S$. Then, $A_\LL\in V_\LL$, for every field extension 
$\LL/\kk,$ where $V_{\LL}$ is the variety of $\LL$-algebras determined by $S.$
\end{prop}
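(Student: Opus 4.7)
The plan is to reduce the claim to a polarization/substitution argument. Fix a homogeneous identity $f=f(x_1,\ldots,x_m)\in S$ of multidegree $(d_1,\ldots,d_m)$ and take arbitrary elements $a_1,\ldots,a_m\in A_\LL$. Because $A_\LL=A\otimes_\kk\LL$, I would write each $a_i$ as a finite sum
\[
a_i=\sum_{j=1}^{n_i}\lambda_{ij}\,b_{ij},\qquad \lambda_{ij}\in\LL,\ b_{ij}\in A,
\]
identifying $b\in A$ with $1\otimes b$. The goal is to prove $f(a_1,\ldots,a_m)=0$ in $A_\LL$.

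Next, I would expand $f(a_1,\ldots,a_m)$ by distributivity. Since every monomial of $f$ is a (possibly nonassociative, noncommutative) product of the $x_i$, and since scalars from $\LL$ are central in $A_\LL$, the expansion takes the form
\[
f(a_1,\ldots,a_m)=\sum_{\mathbf{k}}\lambda^{\mathbf{k}}\,h_{\mathbf{k}}\bigl(b_{11},\ldots,b_{m n_m}\bigr),
\]
where $\lambda^{\mathbf{k}}$ is a monomial in the $\lambda_{ij}$ and each $h_{\mathbf{k}}$ is a $\kk$-polynomial obtained from $f$ by iteratively replacing some $x_i$ with a sum of new variables and picking out a prescribed multidegree. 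By the construction on p.~176 of \cite{o}, these $h_{\mathbf{k}}$ are precisely (the) stable derivatives of $f$.

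The hypothesis then applies: $A$ satisfies every stable derivative of every identity in $S$, so each $h_{\mathbf{k}}(b_{11},\ldots,b_{m n_m})=0$ in $A$. Consequently every summand of the displayed expansion is zero in $A_\LL$, yielding $f(a_1,\ldots,a_m)=0$. Since $f\in S$ and $a_1,\ldots,a_m\in A_\LL$ were arbitrary, $A_\LL$ satisfies $S$, i.e.\ $A_\LL\in V_\LL$.

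The main obstacle is the combinatorial bookkeeping that identifies the coefficients $h_{\mathbf{k}}$ in the expansion with stable derivatives of $f$ in the precise sense of \cite{o}. To handle this cleanly I would factor the substitution $x_i\mapsto\sum_j\lambda_{ij}b_{ij}$ as a composition of elementary two-summand substitutions $x\mapsto x'+x''$, and then argue by induction on the total number of summands that at each stage the coefficient of a given monomial in the $\lambda_{ij}$'s is obtained by one further iteration of the stable-derivative operation. Once that formal correspondence is established, the remainder is a routine distributivity computation that requires no further appeal to the structure of $A$.
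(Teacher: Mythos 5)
Your argument is correct, but it is worth noting that the paper does not actually prove this proposition: its ``proof'' is a one-line citation of \cite[Proposition 4.2]{o}. What you have written is, in effect, a reconstruction of the standard argument behind that result: write each $a_i\in A_\LL$ as an $\LL$-linear combination of elements of $A$, expand $f(a_1,\dots,a_m)$ by distributivity, and observe that the coefficient of each monomial $\lambda^{\mathbf k}$ is a multihomogeneous component of $f$ evaluated at elements of $A$ --- i.e.\ a linearization of $f$ --- which vanishes by hypothesis. This is sound, and your plan to factor the substitution $x_i\mapsto\sum_j\lambda_{ij}b_{ij}$ into elementary two-summand splits and induct is exactly the right way to match these coefficients with Osborn's iterated partial linearizations. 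One small imprecision: you assert the $h_{\mathbf k}$ are ``precisely'' the stable derivatives, which is both more than you need and not quite what is true (the stable derivatives may form a larger class, defined by closure under the derivative operation). All that is required is the inclusion that every linearization arising in the expansion \emph{is} a stable derivative (or at least lies in the set of identities $A$ is assumed to satisfy); this is exactly the remark recorded in \S 2 of the paper, that all stable derivatives of $f$ contain all linearizations of $f$. With that inclusion in hand, and noting that the term with all $b_{ij}$ concentrated in a single summand is handled by the hypothesis $A\in V$ itself, your proof is complete.
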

\begin{proof}
See \cite[Proposition 4.2]{o}.
\end{proof}
 
\begin{cor}\label{cor O}
Let $f(x_1,\dots,x_m)=0$ be a homogeneous identity over a field $\kk$.  Then
\begin{enumerate}
\item
$f$ holds strictly in $A$ if and only if  $A$ satisfies all stable derivatives of $f.$

\item
If the degree of each $x_i,\ i=1,\dots m,$ in $f$ is at most $d,$ and $|\ff|\ge d,$
then  $f$ holds strictly in $A.$
\end{enumerate}
\end{cor}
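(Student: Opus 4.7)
The plan is to pair the two imported results: Theorem~\ref{thm O} delivers the forward direction of (1), Proposition~\ref{prop O} delivers the reverse, and (2) then falls out by one application of Theorem~\ref{thm O} combined with (1).

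For the forward direction of (1), suppose $f$ holds strictly in $A$ and let $d$ be the maximum degree of any variable $x_i$ in $f$. Choose any field extension $\LL/\ff$ with $|\LL|\ge d$ (for instance an algebraic closure of $\ff$, or $\ff(t)$). By strictness, $f$ holds in $A_\LL$, so Theorem~\ref{thm O}, applied to the $\LL$-algebra $A_\LL$, gives that $A_\LL$ satisfies every stable derivative of $f$. The stable derivatives of $f$ are polynomials with coefficients in $\ff$ (only integer coefficients and $\ff$-linear combinations of coefficients of $f$ are introduced when forming them), so restricting along the inclusion $A\subseteq A_\LL$ of $\ff$-algebras shows that $A$ too satisfies every stable derivative of $f$. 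For the converse direction of (1), assume $A$ satisfies every stable derivative of $f$. In particular $f$, being the trivial (zero-fold) stable derivative of itself, is satisfied by $A$, so $A$ lies in the variety $V$ of $\ff$-algebras determined by $S:=\{f\}$. The hypotheses of Proposition~\ref{prop O} are now in force and it gives $A_\LL\in V_\LL$ for every field extension $\LL/\ff$, i.e.\ $f$ holds in $A_\LL$ for every such $\LL$, which is exactly the definition of $f$ holding strictly in $A$.

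For (2), the implicit hypothesis is that $f$ already holds in $A$; since $|\ff|\ge d$, Theorem~\ref{thm O} applied directly over $\ff$ shows that $A$ satisfies every stable derivative of $f$, and part~(1) then upgrades this to strict validity. The only step that requires mild care is the observation that $f$ is itself a stable derivative of $f$, which is what lets us invoke Proposition~\ref{prop O} in the reverse direction of (1) without separately assuming that $A$ satisfies $f$; this is built into the formalism of \cite{o} (the zero-fold derivative being $f$ itself) rather than a computational obstacle. Beyond this, the corollary is essentially a bookkeeping reduction to the two quoted results.
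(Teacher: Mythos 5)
Your proposal is correct and follows essentially the same route as the paper: Theorem~\ref{thm O} applied over a large enough extension gives the forward direction of (1) by restricting along $A\subseteq A_\LL$, Proposition~\ref{prop O} gives the converse, and (2) follows by applying Theorem~\ref{thm O} directly over $\ff$ and invoking (1). Your extra remarks (that the stable derivatives have coefficients in the base field, that $f$ is its own zero-fold stable derivative so $A\in V$, and that (2) implicitly assumes $f$ holds in $A$) are points the paper leaves tacit, and they are accurate.
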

\begin{proof}
(1):\quad Suppose $f$ holds strictly in $A$.  Let $\LL/\kk$ be an infinite field extension.
Then $\ff$ holds in $A_\LL,$ and $\LL$ satisfies the hypothesis of Therem \ref{thm O}.
Hence $A_\LL$ satisfies every stable derivative of $f.$  Since $A$ embeds in $A_{\LL},$
$A$ satisfies every stable derivative of $f.$

Conversely, suppose  $A$ satisfies all stable derivatives of $f$.
Then by Proposition \ref{prop O}, $f$ holds strictly in $\kk.$
\medskip

\noindent
(2):\quad 
Assume the hypotheses of (2).  By Theorem \ref{thm O},
$A$ satisfies every stable derivative of $f$.  Hence (2) follows from (1).
\end{proof}

In this section we prove Theorem \ref{thm identities}.
The proof uses linearization techniques.
We repeatedly use Corollary \ref{cor O} without further reference.

\begin{lemma}\label{lem thm 1.3}
Assume that the identity $x^2x^2=x^3x$ holds strictly in $A$ $($in particular
this holds if $|\ff|>3)$.  Then the following identities hold in $A$
\begin{equation}\label{eq x^2x^2 1}
4x^2(xy)=x^3y+x(x^2y)+2x(x(xy)).
\end{equation}
%
\begin{gather}\label{eq x^2x^2 2}
8(xy)(xw)+4x^2(yw)=(x^2y)w+(x^2w)y+2(x(xy))w+2(x(xw))y\\\notag
+2x((xy)w)+2x((xw)y)+2x((x(yw)).
\end{gather}
\end{lemma}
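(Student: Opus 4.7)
The strategy is to recognize both identities as partial linearizations (stable derivatives) of the identity $f(x) := x^2 x^2 - x x^3$, which by hypothesis holds strictly in $A$. By Corollary~\ref{cor O}(1), every stable derivative of $f$ --- in particular every partial linearization obtained by substituting $x \mapsto x + t_1 y_1 + \cdots + t_k y_k$ into $f$ and extracting a multilinear coefficient in the $t_i$ --- is an identity in $A$. So it suffices to perform two such expansions and collect the terms using commutativity.

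For (2.1), I would substitute $x \mapsto x + ty$ and read off the coefficient of $t$. From $(x + ty)^2 = x^2 + 2t(xy) + t^2 y^2$, the coefficient of $t$ in $(x + ty)^2 (x + ty)^2$ is immediately $4 x^2(xy)$. Since $(x + ty)^3 = (x + ty)(x + ty)^2 = x^3 + t\bigl[x^2 y + 2 x(xy)\bigr] + \cdots$, the coefficient of $t$ in $(x + ty)(x + ty)^3$ works out to $x^3 y + x(x^2 y) + 2 x(x(xy))$. Equating the two (which is legal because $f$ vanishes strictly) yields (2.1).

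For (2.2), I would substitute $x \mapsto x + sy + tw$ and extract the coefficient of $st$. Setting $P := (x + sy + tw)^2 = x^2 + 2s(xy) + 2t(xw) + 2st(yw) + s^2 y^2 + t^2 w^2$ (using commutativity for the cross terms), the coefficient of $st$ in $P \cdot P$ is $4 x^2(yw) + 8 (xy)(xw)$, which is the left-hand side of (2.2). For the cubic side, write $Q := (x + sy + tw)^3 = (x + sy + tw) \cdot P$; a direct expansion gives that the coefficients of $s$, of $t$, and of $st$ in $Q$ are, respectively, $x^2 y + 2 x(xy)$,\ $x^2 w + 2 x(xw)$,\ and $2 x(yw) + 2(xy) w + 2(xw) y$. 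The coefficient of $st$ in $(x + sy + tw) \cdot Q$ is then assembled by the Leibniz-type rule $x \cdot (\text{coeff of } st \text{ in } Q) + y \cdot (\text{coeff of } t \text{ in } Q) + w \cdot (\text{coeff of } s \text{ in } Q)$, and expanding this (again using commutativity to identify $y \cdot (x^2 w)$ with $(x^2 w) y$ and similar) reproduces exactly the seven monomials on the right-hand side of (2.2).

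The only real obstacle is bookkeeping: tracking the multiplicities generated by the symmetric expansions of $(x + ty)^n$ and $(x + sy + tw)^n$, and systematically collapsing monomials via commutativity. No algebraic ingredient beyond Corollary~\ref{cor O} and the commutativity of $A$ is required.
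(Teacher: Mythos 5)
Your proposal is correct and follows essentially the same route as the paper: both identities are obtained as partial linearizations of $x^2x^2=x^3x$, justified by Corollary \ref{cor O}(1), with \eqref{eq x^2x^2 1} coming from the component linear in $y$ and \eqref{eq x^2x^2 2} from the component bilinear in $y$ and $w$. The only cosmetic difference is that you extract the $st$-coefficient of the simultaneous substitution $x\mapsto x+sy+tw$ into the original identity, whereas the paper linearizes the already-derived identity \eqref{eq x^2x^2 1} once more in the direction $w$; these yield the same multilinear component, and your bookkeeping of the coefficients is accurate.
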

\begin{proof}
Linearizing the identity $x^2x^2=x^3x$ at $x$ in the direction $y$ we get
\[
(x^2+2xy+y^2)(x^2+2xy+y^2)=[(x^2+2xy+y^2)(x+y)](x+y).
\]
Equating only the expressions in which $y$ is linear we get
$4x^2(xy)=x^3y+x(x^2y)+2x(x(xy)),$ which is equation \eqref{eq x^2x^2 1}.

Next, linearizing equation \eqref{eq x^2x^2 1} at $x$ in the direction $w$ we get
\begin{align*}
&4(x+w)^2((x+ w)y)\\
&=(x+w)^3y+(x+ w)((x+ w)^2y)+2(x+ w)((x+ w)((x+ w)y)),
\end{align*}
or
\begin{align*}
&4(x^2+2xw+w^2)(xy+wy)\\
&=\big((x^2+2xw+w^2)(x+w)\big)y+(x+w)\big((x^2+2xw+w^2)y\big)\\
&+2(x+ w)\big((x+ w)(xy+ wy)\big).
\end{align*}
Equating only the expressions in which $w$ is linear we get
\begin{gather*}
4x^2(yw)+8(xy)(xw)=[(x^2w)y+2(x(xw))y]+[2x((xw)y)+(x^2y)w]\\
+[2x(x(wy))+2x((xy)w)+2(x(xy))w].
\end{gather*}
rearranging we get equation \eqref{eq x^2x^2 2}.
\end{proof}

\begin{cor}\label{cor thm 1.4}
Assume that the identity $x^2x^2=x^3x$ holds strictly in $A$ 
$($in particular this holds if $|\ff|>3)$.  Then
\begin{enumerate}
\item
identities $(3), (4)$ and $(5)$ of Theorem \ref{thm identities}
hold with $u$ replaced by any $x\in A$ and $z$ replaced by any $y\in A.$
\item
Identities $(1)$ and $(6)$ of Theorem \ref{thm identities}
hold in $A.$
\end{enumerate}
\end{cor}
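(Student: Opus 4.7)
The plan is to derive all six assertions from the two linearized identities already established in Lemma \ref{lem thm 1.3}. Those are polynomial identities in $x,y$ (respectively $x,y,w$) that hold for all arguments in $A$, so items $(3)$, $(4)$, $(5)$ of Theorem \ref{thm identities} should follow by pure substitution, whereas items $(1)$ and $(6)$ will additionally exploit the Peirce decomposition $A = \ff e\oplus U\oplus Z$ attached to the half-axis $e$.

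For part $(1)$ I would observe that $(3)$ is literally equation \eqref{eq x^2x^2 1} with $x\mapsto u$, $y\mapsto z$, so it actually holds for any $x,y\in A$, without invoking the half-axis hypothesis. Identity $(5)$ is just \eqref{eq x^2x^2 1} after swapping the roles of $x$ and $y$; using commutativity to rewrite every term with $u$ on the left puts it into the form demanded by Theorem \ref{thm identities}$(5)$. Identity $(4)$ falls out of \eqref{eq x^2x^2 2} by setting $w=y$: the left-hand side collapses to $8(xy)^2+4x^2y^2$, each pair of duplicate terms on the right collects, and division by $2$ gives precisely $(4)$.

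For part $(2)$ I would specialize the same identities using the Peirce decomposition at $e$. For $(1)$, substitute $x=u$, $y=e$ in \eqref{eq x^2x^2 1}; using $ue=\half u$ and writing $u^2=\gd_{u^2}e+z_{u^2}$ with $z_{u^2}\in Z$ simplifies most terms immediately, and the key observation is that $u^3=\tfrac{\gd_{u^2}}{2}u+uz_{u^2}$ lies in $U$ (because $UZ\subseteq U$), hence $u^3e=\half u^3$. Collecting yields $2u^3=\tfrac{3}{2}u^3+\tfrac{\gd_{u^2}}{2}u$, i.e.\ $u^3=\gd_{u^2}u$. For identity $(6)$, I would substitute $x=u$, $y=e$, $w=z$ in \eqref{eq x^2x^2 2}: three summands vanish because $ez=0$, the relations $ue=\half u$ and $(uz)e=\half uz$ (since $uz\in U$) simplify four more, and $u^2z=z_{u^2}z\in Z$. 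Writing the Peirce decomposition $u(uz)=\alpha e+z_{u(uz)}+[u(uz)]_U$ and collecting leaves the single equation $[u(uz)]_U+2z_{u(uz)}=u^2z$. Since the right-hand side belongs to $Z$, matching Peirce components forces $[u(uz)]_U=0$ and $z_{u(uz)}=\half u^2z$, which is $(6)$.

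The main technical obstacle is the bookkeeping for identity $(6)$: the right-hand side of \eqref{eq x^2x^2 2} has seven summands, each of which must be expanded through the Peirce decomposition of its arguments before the direct-sum decomposition $A=\ff e\oplus U\oplus Z$ permits us to conclude both $[u(uz)]_U=0$ and the formula for $z_{u(uz)}$. The computation for $(1)$ follows the same pattern but is considerably shorter.
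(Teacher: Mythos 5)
Your proof is correct and follows essentially the same route as the paper: part (1) by direct substitution into the linearized identities \eqref{eq x^2x^2 1} and \eqref{eq x^2x^2 2} (including setting $w=y$ and halving for identity (4)), and part (2) by specializing one variable to $e$ and comparing Peirce components. The only cosmetic differences are which of $y,w$ you set equal to $e$ in \eqref{eq x^2x^2 2} (immaterial, since that identity is symmetric in $y$ and $w$) and that you deduce $[u(uz)]_U=0$ from the resulting equation rather than directly from the fusion rule $U^2\subseteq\ff e+Z$.
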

\begin{proof}
(1):\quad Replacing $u$ by $x$ and $z$ by $y$ in identity (3) of Theorem \ref{thm identities},
we get identity \eqref{eq x^2x^2 1}.  Similarly
we get identity (5) of Theorem \ref{thm identities}.  Putting $w=y$ in identity
\eqref{eq x^2x^2 2} we get the more general version of identity (4) of Theorem \ref{thm identities}.

(2):\quad Letting $y=e$ and $x=u\in U$
in identity \eqref{eq x^2x^2 1}, we get
$2u^3=\half u^3+\half\gd_{u^2}u+u^3,$ and identity (1) of Theorem \ref{thm identities}
follows.  

Finally let $w=e,\, x=u\in U$ and $y=z\in Z$ in identity \eqref{eq x^2x^2 2}.  Then $(x^2y)e=(x^2e)y=0$ and hence
by identity \eqref{eq x^2x^2 2} we get
\[
4u(uz)=2e(u(uz))+u^2z+u(uz)+u(uz).
\]
It follows that $u^2z=2(u(uz)-e(u(uz))),$ this shows identity (6) of Theorem \ref{thm identities}.
\end{proof}

\begin{prop}\label{prop uzz}
Assume that the identity $(xx^2)x^2=x(x^2x^2)$ holds strictly in $A$ 
$($in particular this holds if $|\ff|>3)$.  Then idenity (2) of 
Theorem \ref{thm identities} holds in $A.$
\end{prop}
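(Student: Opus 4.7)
The plan is to linearize the degree-$5$ identity $(xx^2)x^2=x(x^2x^2)$ and then substitute the variables so as to read off a scalar multiple of $(uz)z-\half uz^2$ from one homogeneous component of the resulting relation. Since the identity holds strictly in $A$, Corollary~\ref{cor O}(1) makes every stable derivative of it (in particular the one-directional linearization used below) available in $A$.

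Fix $u\in U$ and $z\in Z$. I would introduce formal parameters $s,t$, set $a:=e+sz$ and $b:=u$, put $x:=a+tb$, substitute into $(xx^2)x^2-x(x^2x^2)=0$, and equate the coefficient of $t^1$ to zero. Applying the usual Leibniz-type derivations $D_b$ to each factor yields the linearized identity
\[
(ba^2+2a(ab))a^2+2(aa^2)(ab)=b(a^2a^2)+4a(a^2(ab)),
\]
which, under our hypothesis, holds for all $a,b\in A$. I would then extract the coefficient of $s^2$ after inserting $a=e+sz$.

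Using only the Peirce data $e^2=e$, $eu=\half u$, $ez=0$ and the fusion rule $UZ\subseteq U$ from Definition~\ref{def half axis}, the following compact expansions should drop out:
\[
a^2=e+s^2z^2,\quad ab=\half u+s(uz),\quad aa^2=e+s^3z^3,\quad a^2a^2=e+s^4z^2z^2.
\]
The decisive observation is that $aa^2$ and $a^2a^2$ contain no $s^1$- or $s^2$-term, so on the left only $(ba^2+2a(ab))a^2$ contributes at $s^2$, and on the right only $4a(a^2(ab))$ contributes. Expanding and using $ew=\half w$ for each $w\in U$ that appears (namely $uz^2$ and $(uz)z$), the left side should give $\thalf uz^2+(uz)z$ and the right side $uz^2+2(uz)z$. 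Equating the two yields $(uz)z=\half uz^2$.

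The main obstacle is purely combinatorial bookkeeping: at $s^2$ one must correctly identify the handful of cross-products that survive and apply $ew=\half w$ to each element of $U$ that appears. There is no conceptual subtlety beyond this careful accounting; in particular no appeal to the other identity $x^2x^2=xx^3$ is needed.
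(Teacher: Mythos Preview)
Your argument is correct; I checked the $s^2$--bookkeeping and it comes out exactly as you claim: the left side contributes $\thalf uz^2+(uz)z$, the right side $uz^2+2(uz)z$, and equating gives $(uz)z=\half uz^2$.

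Both you and the paper are extracting the same homogeneous component of the strict identity---the piece of degree $2$ in $e$, degree $2$ in $z$, and degree $1$ in $u$---but the extraction is organized differently. The paper rewrites the identity as $[x,x^2,x^2]=0$ and performs three successive one-variable linearizations (in directions $y$, then $v$, then $w$) to arrive at a four-variable identity (equation~\eqref{eq lin w}), and then specializes $w=v=e$, $x=z$, $y=u$, simplifying each associator via the fusion rules. You instead linearize once in the direction $b=u$, feed the polynomial $a=e+sz$ into the remaining slot (legitimately, since strict validity lets you work in $A_{\ff(s)}$), and read off the $s^2$-coefficient. Your route is noticeably shorter for this proposition and avoids the associator bookkeeping. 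The paper's longer route is not wasted, however: the intermediate linearization \eqref{eq lin v} is reused later in the proof of Lemma~\ref{lem u(z_1z_2)}(2).
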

\begin{proof}
We write the identity $(xx^2)x^2=x(x^2x^2)$ as
\begin{equation}\label{eq x^3x^2=xx^4}
[x,x^2,x^2]=0,
\end{equation}
in terms of the (trilinear) associator $[x,y,z]=(xy)z-x(yz)$.
Liniarizing identity \eqref{eq x^3x^2=xx^4} in the direction of $y$ we get
\begin{equation}\label{eq lin y}
[y,x^2,x^2]+2[x,xy,x^2]+2[x,x^2,xy]=0.
\end{equation}
Next we claim that the identity
\begin{align}\label{eq lin v}
0&=[x,yv,x^2]+[y,vx,x^2]+[v,xy,x^2]+[x,x^2,yv]\\\notag
&+[y,x^2,vx]+[v,x^2,xy]+2[x,xy,xv]+2[x,xv,xy],
\end{align}
holds in $A$.  Indeed, linearizing identity \eqref{eq lin y} at $x$ in the direction $v$ we get
\begin{gather*}
[y,(x+v)^2,(x+v)^2]+2[(x+v),(x+v)y,(x+v)^2]\\
+2[(x+v),(x+v)^2,(x+v)y]=0.
\end{gather*}
So
\begin{gather*}
\Big(2[y,xv,x^2]+2[y,x^2,xv]\Big)+2\Big([x,vy,x^2]+2[x,xy,xv]+[v,xy,x^2]\Big)\\
+2\Big([x,x^2,vy]+2[x,xv,xy]+[v,x^2,xy]\Big)=0,
\end{gather*}
dividing by $2$ and rearranging we get identity \eqref{eq lin v}.

Next linearizing identity \eqref{eq lin v} at $x$ in the direction $w$
we get
\begin{align*}
0&=[x+w,yv,(x+ w)^2]+[y,v(x+ w),(x+ w)^2]\\
&+[v,(x+ w)y,(x+ w)^2]+[x+ w,(x+ w)^2,yv]\\
&+[y,(x+ w)^2,v(x+ w)]+[v,(x+ w)^2,(x+ w)y]\\
&+2[x+ w,(x+ w)y,(x+ w)v]+2[x+ w,(x+ w)v,(x+ w)y].
\end{align*}
Doing similar calculations and rearranging we get
\begin{align}\label{eq lin w}
0&= [y, vw, x^2] + [v,wy, x^2] + [w, yv, x^2]\\\notag
&+ 2[y, xv, xw] + 2[v, xw, xy] + 2[w, xy, xv]\\\notag
&+ 2[y, xw, xv] + 2[w, xv, xy] + 2[v, xy, xw]\\\notag
&+ [y, x^2, vw] + [v, x^2,wy] + [w, x^2, yv]\\\notag
&+ 2[x, yv, xw] + 2[x, vw, xy] + 2[x,wy, xv]\\\notag
&+ 2[x, xy, vw] + 2[x, xv,wy] + 2[x, xw, yv].
\end{align}
Here we put $w = v = e$ and $x=z\in Z, y=u\in U,$ to deduce
\begin{align*}
0&= [u, e, z^2] + [e,eu, z^2] + [e, ue, z^2]\\\notag
&+ 2[u, ze, ze] + 2[e, ze, zu] + 2[e, zu, ze]\\\notag
&+ 2[u, ze, ze] + 2[e, ze, zu] + 2[e, zu, ze]\\\notag
&+ [u, z^2, ee] + [e, z^2,eu] + [e, z^2, ue]\\\notag
&+ 2[z, ue, ze] + 2[z, ee, zu] + 2[z,eu, ze]\\\notag
&+ 2[z, zu, ee] + 2[z, ze,eu] + 2[z, ze, ue].
\end{align*}
So
 \begin{align*}
0&= [u, e, z^2] + 2[e,eu, z^2]\\\notag
&+ 0 + 0 + 0\\\notag
&+ 0 + 0 + 0\\\notag
&+ [u, z^2, e] + [e, z^2,eu] + [e, z^2, ue]\\\notag
&+ 0 + 2[z, e, zu] + 0\\\notag
&+ 2[z, zu, e] + 0 + 0.
\end{align*}
The fusion rules now yield: 
\[
0 = \half uz^2 + 0 +\half uz^2 - \quar uz^2-\quar uz^2 -z(zu)+0.
\]
This shows that identity (2) of Theorem \ref{thm identities} holds.
\end{proof}

Of course Theorem \ref{thm identities} follows from Corollary \ref{cor thm 1.4}
and Proposition \ref{prop uzz}.
 
\section{The proof of the Main Theorem}
In this section $A$ is a commutative non-associative algebra
(at the moment we do not put any additional hypotheses on $A$).
We now prove the Main Theorem.

The implication $(i)\Rightarrow (ii)$ is well known and follows from
the fact that Jordan algebras are stable under base change and are power associative. The implication
$(ii)\Rightarrow (iii)$ is obvious, while the implication $(iii)\Rightarrow (iv)$
follows from Theorem \ref{thm identities}.

We now prove the implication $(iv)\Rightarrow (i)$.
We start with:
 
\begin{lemma}\label{lem u z1}
Let $u\in U$.  Then the following are equivalent:
\begin{itemize}
\item[$(i)$]
$u^3=\gd_{u^2}u.$

\item[$(ii)$]
$uz_{u^2}=\half\gd_{u^2} u.$

\item[$(iii)$]
$(eu^2)u=\half u^3.$
\end{itemize}
Furthermore, if (i)--(iii) hold and $u^2u^2=u^3u,$
then $z_{u^2}^2=\gd_{u^2} z_{u^2}.$
\end{lemma}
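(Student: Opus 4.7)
The plan is to exploit the Peirce decomposition $u^2 = \gd_{u^2} e + z_{u^2}$, valid because $u\in U$ and $U^2\subseteq \ff e + Z$ by Definition~\ref{def half axis}(4). Together with the basic eigenspace identities $eu = \half u$ and $ez_{u^2}=0$ (the latter since $z_{u^2}\in Z = A_0(e)$), the three equivalences (i)--(iii) reduce to routine bookkeeping.

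First I would compute
\[
u^3 \;=\; u\cdot u^2 \;=\; \gd_{u^2}(eu) + uz_{u^2} \;=\; \half\gd_{u^2}\,u + uz_{u^2},
\]
which makes the equivalence $(i)\Leftrightarrow(ii)$ immediate upon subtracting $\half\gd_{u^2}u$ from both sides. Next I would note $eu^2 = \gd_{u^2}e + ez_{u^2} = \gd_{u^2}e$, so $(eu^2)u = \gd_{u^2}(eu) = \half\gd_{u^2}\,u$; then $(iii)$ reads $\half u^3 = \half\gd_{u^2}\,u$, which (since $\charc\ff\ne 2$) is again equivalent to $(i)$.

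For the final assertion, assuming $(i)$ together with $u^2u^2 = u^3u$, I would expand both sides via the same decomposition. The left side gives
\[
u^2u^2 \;=\; (\gd_{u^2}e + z_{u^2})^2 \;=\; \gd_{u^2}^2\, e + z_{u^2}^2,
\]
using $e^2=e$ and $ez_{u^2}=0$. The right side, using $(i)$, is $u^3u = \gd_{u^2}\,u^2 = \gd_{u^2}^2\, e + \gd_{u^2}\, z_{u^2}$. Equating the $Z$-components — which is legal by the directness of $\ff e \oplus U \oplus Z$ together with $Z^2\subseteq Z$ — yields $z_{u^2}^2 = \gd_{u^2}\, z_{u^2}$, as desired. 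There is essentially no obstacle here: the entire argument is Peirce arithmetic, and the only care needed is tracking that each cross term involving $ez_{u^2}$ vanishes.
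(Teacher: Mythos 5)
Your proposal is correct and follows essentially the same route as the paper's own proof: both arguments decompose $u^2=\gd_{u^2}e+z_{u^2}$, use $eu=\half u$ and $ez_{u^2}=0$ to get $u^3=\half\gd_{u^2}u+uz_{u^2}$ and $(eu^2)u=\half\gd_{u^2}u$, and then compare the $\ff e$- and $Z$-components of $(u^2)^2=u^3u=\gd_{u^2}u^2$ for the final assertion. No gaps.
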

\begin{proof}
$(i)\iff(ii):$\quad
We have
\[
u^3=u^2u=(\gd_{u^2} e+z_{u^2})u=\half\gd_{u^2} u+uz_{u^2}.
\]
Hence $(i)$ and $(ii)$ are equivalent.
\medskip

\noindent
$(i)\iff(iii):$\quad
Since $(eu^2)u=(\gd_{u^2}e)u=\half \gd_{u^2}u$ we see that $(i)$ and $(iii)$ are equivalent.

Suppose that $u^2u^2=u^3u$ and that $(i)$ holds.  Then
\[
\gd_{u^2}^2e+z_{u^2}^2=(u^2)^2=u^3u=\gd_{u^2} u^2=\gd_{u^2}^2 e+\gd_{u^2} z_{u^2}.\qedhere
\]
\end{proof}

\begin{lemma}\label{lem jordan e}
Assume that $u^3=\gd_{u^2}u$ and that $(uz)z=\half uz^2,$ for all
$u\in U$ and $z\in Z$.  Then
$x(x^2e)=x^2(xe),$ for all $x\in A$.  
\end{lemma}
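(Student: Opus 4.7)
The plan is to carry out a direct Peirce-decomposition calculation. Write $x = \alpha e + u + z$ with $\alpha \in \ff$, $u \in U$, $z \in Z$. Since $eu = \half u$ and $ez = 0$, one immediately has $xe = \alpha e + \half u$. Using $u^2 = \gd_{u^2}e + z_{u^2}$ (coming from $U^2 \subseteq \ff e + Z$), one also computes
\[
x^2 = (\alpha^2 + \gd_{u^2})e + \alpha u + 2uz + z_{u^2} + z^2,
\]
and consequently
\[
x^2 e = (\alpha^2 + \gd_{u^2})e + \tfrac{\alpha}{2}u + uz.
\]

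Next I would expand the two products $x(x^2e)$ and $x^2(xe)$ using the fusion rules $UZ \subseteq U$, $Z^2 \subseteq Z$, $U^2 \subseteq \ff e + Z$, multiplying each summand of $x$ against each summand of the other factor. Most of the terms coincide by commutativity; in particular the pieces proportional to $\alpha^3 e$, $\alpha \gd_{u^2} e$, $\alpha^2 u$, $\gd_{u^2} u$, $\alpha\, uz$, $u(uz)$, and $\alpha\, z_{u^2}$ match on both sides. After collecting and cancelling, the difference will collapse to
\[
x^2(xe) - x(x^2e) \;=\; -\tfrac{1}{4}\gd_{u^2}\,u \;+\; \tfrac{1}{2}\,u z_{u^2} \;+\; \tfrac{1}{2}\,u z^2 \;-\; (uz)z.
\]

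The remaining step is to kill this expression using the two hypotheses. The hypothesis $(uz)z = \half uz^2$ eliminates the last two terms. For the first two, the hypothesis $u^3 = \gd_{u^2} u$ is equivalent, via Lemma \ref{lem u z1}$(i)\iff(ii)$, to $u z_{u^2} = \half \gd_{u^2} u$, and this makes $-\quar \gd_{u^2} u + \half u z_{u^2} = 0$. Hence $x^2(xe) = x(x^2e)$.

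The computation is routine but bookkeeping-heavy, so the main obstacle is simply keeping the Peirce components organized when expanding the nine cross-terms of $x \cdot (x^2e)$ and of $x^2 \cdot (xe)$; the conceptual content is confined to invoking the two given identities (the second only through its reformulation in Lemma \ref{lem u z1}). No further hypotheses beyond (i)--(iii) of that lemma and the stated assumption on $(uz)z$ are needed.
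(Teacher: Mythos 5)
Your proposal is correct and follows essentially the same route as the paper: a direct Peirce expansion of $x(x^2e)$ and $x^2(xe)$ for $x=\alpha e+u+z$, with the residual discrepancy killed by $(uz)z=\half uz^2$ and by the reformulation of $u^3=\gd_{u^2}u$ from Lemma \ref{lem u z1} (you use the equivalence $(i)\iff(ii)$, i.e.\ $uz_{u^2}=\half\gd_{u^2}u$, where the paper uses $(i)\iff(iii)$, i.e.\ $(eu^2)u=\half u^3$ — an immaterial difference). Your claimed difference $x^2(xe)-x(x^2e)=-\quar\gd_{u^2}u+\half uz_{u^2}+\half uz^2-(uz)z$ checks out.
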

\begin{proof} 
Write $x=\ga e +u+z,$ with $u\in U$ and $z\in Z$.  Then
\begin{align*}
x^2 &=\ga^2e + u^2+ z^2+\ga u + 2uz.\\
xe &=\ga e+\half u.\\
x^2e &=\ga^2e+eu^2+\half\ga u+uz.
\end{align*}
Hence
\begin{align*}
&(x^2e)x =(\ga^2e+eu^2+\half\ga u+uz)(\ga e +u+z)\\
&=\ga^3 e+\tquar\ga^2u+\ga eu^2+\half u^3+\half \ga u^2+\ga uz+u(uz)+(uz)z.
\end{align*}
Indeed $\tquar\ga^2u$ is obtained from $(\ga^2e) u$ and $(\half\ga u)(\ga e)$.
And $\ga uz$ is obtained from $(\half \ga u) z$ and $(uz) (\ga e)$.
Further, the equality $(eu^2) u=\half u^3$  holds by hypothesis, and by Lemma \ref{lem u z1}.

Next we compute
\begin{align*}
x^2(xe) &=(\ga^2e + u^2+ z^2+\ga u + 2uz)(\ga e+\half u)\\
&=\ga^3 e+\ga e u^2+\tquar\ga^2 u+\ga uz\\
&+\half u^3+\half uz^2+\half\ga u^2+u(uz).
\end{align*}
Indeed $\tquar\ga^2u$ is obtained from $(\ga u) (\ga e)$ and $(\ga^2 e) (\half u)$.
Note now that all that remains to show in order to show that $(x^2e)x=x^2(xe),$ is the equality 
$(uz)z=\half uz^2$.  This equality holds by hypothesis.
\end{proof}
 
We can now prove the implication $(iv)\Rightarrow (i)$ of the Main Theorem.

\noindent
Suppose that $A$ is a primitive axial algebra of Jordan type half generated by a set of half-axes $\cala$.
By \cite[Corollary 1.2, p.~81]{hrs} (see also \cite[Corollary 3.4]{hss2})
we may assume that $A$ is spanned by $\cala$.  Hence to show the identity
$x^2(xy)=x(x^2y),$ for all $x,y\in A,$ it suffices to show this identity
when $y=f$ is an arbitrary half-axis $f\in\cala$.  Since $e$ is an arbitrary half-axis
in $A,$ Lemma \ref{lem jordan e},
and hypotheses  $(iv)$ of the Main Theorem show that  $x^2(xf)=x(x^2f),$ 
for all half-axes $f\in\cala$.

\section{Consequences of Theorem \ref{thm identities}}
In this section we prove some consequences of Theorem
\ref{thm identities} which are useful to know and
which will be applied in the following sections.
Thus, throughout this section
we assume that the identities $x^2x^2=xx^3$ and  $x^3x^2=xx^4$ hold strictly in $A.$
\begin{lemma}\label{lem (u_1u_2)z}
Let $u_1, u_2\in U,$ and $z\in Z$.  Then 
\[
u_1(u_2z)+u_2(u_1z)=\gr e+(u_1u_2)z,
\]
where $\gr=\gvp_e(u_1(u_2z))+\gvp_e(u_2(u_1z)).$
\end{lemma}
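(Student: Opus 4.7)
The approach is to polarize identity $(6)$ of Theorem~\ref{thm identities} in the $U$-variable.

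First, I would record the structural observation that both sides of the claimed identity decompose correctly into $\ff e \oplus Z$. Since $u_i z \in UZ \subseteq U$ and $U^2 \subseteq \ff e + Z$, we have $u_1(u_2 z), u_2(u_1 z) \in \ff e + Z$. On the other hand, writing $u_1 u_2 = \gd_{u_1u_2} e + z_{u_1u_2}$ with $z_{u_1u_2} \in Z$, and using $ez = 0$ together with $Z^2 \subseteq Z$, we see that
\[
(u_1u_2) z = z_{u_1u_2}\, z \in Z.
\]
Hence the asserted identity is equivalent to the pair of statements: its $e$-component equals $\rho$ (which is tautological from the definition of $\rho$) and its $Z$-component satisfies
\[
z_{u_1(u_2 z) + u_2(u_1 z)} = (u_1 u_2) z.
\]

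The key step is to apply identity $(6)$ of Theorem~\ref{thm identities}, namely $z_{u(uz)} = \half u^2 z$, to the three elements $u_1,\, u_2,\, u_1 + u_2 \in U$. Expanding
\[
(u_1+u_2)\bigl((u_1+u_2)z\bigr) = u_1(u_1 z) + u_2(u_2 z) + u_1(u_2 z) + u_2(u_1 z)
\]
and $(u_1+u_2)^2 z = u_1^2 z + u_2^2 z + 2(u_1u_2) z$, identity $(6)$ applied to $u_1+u_2$ gives
\[
z_{u_1(u_1 z)} + z_{u_2(u_2 z)} + z_{u_1(u_2 z) + u_2(u_1 z)} = \half u_1^2 z + \half u_2^2 z + (u_1 u_2) z.
\]
Subtracting the instances of identity $(6)$ for $u_1$ and $u_2$ separately cancels the diagonal contributions and yields exactly the required $Z$-component equality. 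Combined with the tautological identification of the $e$-component, this proves the lemma.

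There is no real obstacle: once one notices that $(u_1u_2)z$ lies in $Z$ and that the sum $u_1(u_2 z) + u_2(u_1 z)$ is a bilinear symmetric function of $(u_1, u_2) \in U \times U$ whose diagonal is governed by identity $(6)$, the result is a one-line polarization (legitimate because $\charc \ff \neq 2$).
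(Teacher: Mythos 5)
Your proof is correct and takes essentially the same route as the paper: the paper also obtains the lemma by applying Theorem \ref{thm identities}(6) to $u_1$, $u_2$ and $u_1+u_2$ and subtracting the diagonal instances (it phrases this as "linearization of Theorem \ref{thm identities}(6)"). Your preliminary observation that $(u_1u_2)z\in Z$ and $u_1(u_2z)+u_2(u_1z)\in\ff e+Z$, which makes the $e$-component statement tautological, is a slightly more explicit bookkeeping of what the paper does implicitly.
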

This follows immediately from linearization of Theorem \ref{thm identities}(6).
\begin{proof}
For the convenience of the reader we give the details.
Using Theorem \ref{thm identities}(6), write $u_i(u_iz)=\gc_i e+\half u_i^2z,$ with $\gc_i\in\ff,$ for $i\in \{1, 2\}$.  Then
by Theorem \ref{thm identities}(6),
\begin{align*}
&(u_1+u_2)((u_1+u_2)z)=\gc e+\half(u_1+u_2)^2z, \text{ for some $\gc\in\ff$}\\
&\iff u_1(u_1z)+u_2(u_2z)+u_1(u_2z)+u_2(u_1z)\\
&=\gc e+\half u_1^2z+\half u_2^2z+(u_1u_2)z\\
&\iff \gc_1e+\half u_1^2z+\gc_2 e+\half u_2^2z+u_1(u_2z)+u_2(u_1z)\\
&=\gc e+\half u_1^2z+\half u_2^2z+(u_1u_2)z\\
&\iff (\gc_1+\gc_2)e+u_1(u_2z)+u_2(u_1z)=\gc e+(u_1u_2)z\\
&\iff  u_1(u_2z)+u_2(u_1z)=\gr e+(u_1u_2)z.
\end{align*}
The lemma follows.
\end{proof}

\begin{lemma}\label{lem u(z_1z_2)}
Let $u\in U$ and $z_1, z_2, z\in Z$. Then
\begin{enumerate}
\item
$u(z_1z_2)=(uz_1)z_2+(uz_2)z_1.$

\item
$(uz^2)z=(uz)z^2=\half uz^3.$
\end{enumerate}
\end{lemma}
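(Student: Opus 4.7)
For part (1), the plan is simply to linearize identity (2) of Theorem \ref{thm identities}, which says $(uz)z=\half uz^2$, in the variable $z$. Concretely, replace $z$ by $z_1+z_2$ and expand:
\[
(u(z_1+z_2))(z_1+z_2)=\half u(z_1+z_2)^2.
\]
The left side expands to $(uz_1)z_1+(uz_2)z_2+(uz_1)z_2+(uz_2)z_1$, the right to $\half uz_1^2+\half uz_2^2+u(z_1z_2)$. The ``pure'' terms cancel by two applications of the original identity, leaving exactly
\[
(uz_1)z_2+(uz_2)z_1=u(z_1z_2),
\]
which is (1). (Since we are working under strict validity, the linearization is a genuine identity in $A$, so this step is legitimate without a size restriction on $\ff$.)

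For part (2), the plan is to combine part (1) with identity (5) of Theorem \ref{thm identities}. Applying (1) to $z_1=z$ and $z_2=z^2$ gives a first relation
\[
uz^3=u(z\cdot z^2)=(uz)z^2+(uz^2)z.
\]
Identity (5) reads $4(uz)z^2=uz^3+(uz^2)z+2((uz)z)z$. In the last summand, use identity (2) to write $(uz)z=\half uz^2$, so $2((uz)z)z=(uz^2)z$, and identity (5) becomes
\[
4(uz)z^2=uz^3+2(uz^2)z.
\]
Eliminating $(uz^2)z$ between these two equations yields $6(uz)z^2=3uz^3$, i.e.\ $(uz)z^2=\half uz^3$, and then the first relation forces $(uz^2)z=\half uz^3$ as well.

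I do not anticipate a real obstacle here; the lemma is essentially a consequence of Theorem \ref{thm identities}(2) and (5) combined with one linearization. The only care needed is to note that the linearization in part (1) requires us to be working with an identity that holds strictly, so that $(uz)z=\half uz^2$ continues to hold after the substitution $z\mapsto z_1+z_2$, and then to recognize that applying part (1) at the single element $z^2\in Z$ gives the extra equation that, together with identity (5), pins down both quantities in (2) to $\half uz^3$.
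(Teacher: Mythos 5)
Your part (1) is exactly the paper's argument: linearize $(uz)z=\half uz^2$ at $z$ in the direction $z_2$ and cancel the pure terms. Nothing to add there.

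Part (2), however, has a genuine gap. Writing $a=(uz)z^2$, $b=(uz^2)z$, $c=uz^3$, your two inputs are $c=a+b$ (from part (1) at $z_1=z$, $z_2=z^2$) and $4a=c+2b$ (Theorem \ref{thm identities}(5) after substituting $(uz)z=\half uz^2$). Eliminating $b$ gives $6a=3c$, i.e.\ $3(2a-c)=0$. To conclude $a=\half c$ you must divide by $3$, and the standing hypothesis is only $\charc\ff\ne 2$; fields of characteristic $3$ (e.g.\ $\GF(9)$ or $\GF(3)(t)$, which even satisfy $|\ff|>3$) are squarely within the scope of the lemma, and there your system degenerates to the single relation $c=a+b$, which does not pin down either quantity. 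The paper avoids this by getting a genuinely independent third relation: it substitutes $x=z$, $y=e$, $v=u$ into the linearized associator identity \eqref{eq lin v} (a consequence of $x^3x^2=xx^4$), where the terms $[z,eu,z^2]$, $[z,z^2,eu]$, $[e,z^2,uz]$, etc.\ collapse via the fusion rules to give $(uz^2)z=\half uz^3$ directly, with no division by $3$; combined with part (1) this forces $(uz)z^2=\half uz^3$ as well. So your outline needs either the additional hypothesis $\charc\ff\ne 3$ or a replacement for the elimination step along the lines of the paper's use of \eqref{eq lin v}.
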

\begin{proof}
(1):\quad
By Theorem \ref{thm identities}(2) we have
\begin{align*}
&2(u(z_1+z_2))(z_1+z_2)=u(z_1+z_2)^2\quad\iff\\
&2(uz_1)z_1+2(uz_2)z_2+2[(uz_1)z_2+(uz_2)z_1]=uz_1^2+uz_2^2+2u(z_1z_2).
\end{align*}
Since $2(uz_i)z_i=uz_i^2,$ for $i=1,2,$ part (1) holds.
\medskip

\noindent
(2):   Put  $x = z,$ $y = e$ and $v=u$ in identity \eqref{eq lin v} to conclude
\begin{align*}
0&=[z,eu,z^2]+[e,uz,z^2]+[u,ez,z^2]+[z,z^2,eu]\\\notag
&+[e,z^2,uz]+[u,z^2,ze]+2[x,ze,zu]+2[z,zu,ze],
\end{align*}
That is
\[
0=\half (uz)z^2-\half (uz^2)z+0+0+\half uz^3-\half (uz^2)z-\half (uz)z^2+0+0+0.
\]
Hence $(uz^2)z=\half uz^3$.  Also by (1), $uz^3=(uz^2)z+(uz)z^2,$
so (2) holds.
\end{proof}

\begin{lemma}\label{lem u_1(u_1u_2)}
Let $u_1, u_2\in U$ and write $u_1^2=\gd_1e+z_1$ and $u_2^2=\gd_2e+z_2,$
with $\gd_i\in \ff$ and $z_i\in Z,$ for $i=1,2.$.

Let
\begin{align*}
u_1u_2&=\gd_{12}e+z_{12}\text{ and }(u_1+u_2)^2=\gd_{1+2}e+z_{1+2},\\
&\gd_{12},\gd_{1+2}\in\ff, z_{12},z_{1+2}\in Z.
\end{align*}
Then
\begin{enumerate}
\item
$\gd_{1+2}=\gd_1+\gd_2+2\gd_{12}.$

\item
\begin{gather*}
\gd_{1+2}(u_1+u_2)=\\
\gd_1u_1+\gd_2u_2+u_1^2u_2+u_2^2u_1+\gd_{12}u_1+\gd_{12}u_2+2z_{12}u_1+2z_{12}u_2.
\end{gather*}
 
\item
$u_1^2u_2+u_2^2u_1-\gd_{12}u_1-\gd_2u_1-\gd_{12}u_2-\gd_1u_2+2z_{12}(u_1+u_2)=0.$ 

\item
$u_1^2u_2-\gd_{12}u_1-\gd_1u_2+2u_1z_{12}=0.$

\item
$2u_1(u_1u_2)=-u_1^2u_2+\gd_1u_2+2\gd_{12}u_1.$

\item
$u_2z_1-\gd_{12}u_1-\half\gd_1u_2+2u_1z_{12} =0.$

\item
$2(u_1z_{12})z_1=\half\gd_{12}\gd_1 u_1.$

\item
$2(u_1(u_1u_2))z_1=\gd_{12}\gd_1 u_1.$
\end{enumerate}
\end{lemma}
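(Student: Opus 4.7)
My plan is to treat the eight parts in order as a short chain.  Parts (1)--(3) are immediate from expanding the square and cube of $u_1+u_2$; parts (4) and (5) are the substantive step and rest on a linearization of the (strictly valid) identity $u^3=\gd_{u^2}u$ from Theorem \ref{thm identities}(1); parts (6)--(8) are then algebraic consequences of (4) using Theorem \ref{thm identities}(2) and Lemma \ref{lem u z1}.

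First, for (1) I expand $(u_1+u_2)^2=u_1^2+2u_1u_2+u_2^2$ and match the $\ff e$-components.  For (2) I apply Theorem \ref{thm identities}(1) to $u_1+u_2$, expand the cube as $((u_1+u_2)^2)(u_1+u_2)$, substitute $u_i^3=\gd_iu_i$, and simplify each mixed term via $u_i(u_1u_2)=u_i(\gd_{12}e+z_{12})=\half\gd_{12}u_i+u_iz_{12}$.  Part (3) then follows by rewriting $\gd_{1+2}(u_1+u_2)$ via (1) and equating with (2).

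Part (5) is the heart of the argument.  The identity $u^3=\gvp_e(u^2)\,u$ is a cubic polynomial identity on $U$ (the factor $\gvp_e(u^2)$ is a quadratic form on $U$ whose polarization sends $(u_1,u_2)$ to $2\gd_{12}$), and it holds strictly in $A$ since it was derived from strict hypotheses.  Linearizing at $u_1$ in the direction $u_2$ and extracting the degree-$(2,1)$ component yields
\begin{equation*}
u_1^2u_2+2u_1(u_1u_2)=\gd_1u_2+2\gd_{12}u_1,
\end{equation*}
which rearranges to (5).  Substituting $u_1(u_1u_2)=\half\gd_{12}u_1+u_1z_{12}$ into this then gives (4).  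The main obstacle is making this linearization rigorous when $\ff$ is small; this is handled by the strict-validity framework of Corollary \ref{cor O} (pass to an infinite scalar extension $A_\kk$, separate coefficients of a transcendental in $u_1+tu_2$, then descend since $A$ embeds in $A_\kk$).

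Finally, (6) follows from (4) by writing $u_1^2u_2=(\gd_1e+z_1)u_2=\half\gd_1u_2+u_2z_1$.  For (7) I multiply (6) on the right by $z_1$: Theorem \ref{thm identities}(2) gives $(u_2z_1)z_1=\half u_2z_1^2$, while Lemma \ref{lem u z1} applied to $u_1$ gives both $z_1^2=\gd_1z_1$ and $u_1z_1=\half\gd_1u_1$; the contributions $(u_2z_1)z_1$ and $\half\gd_1(u_2z_1)$ then cancel, leaving $2(u_1z_{12})z_1=\gd_{12}(u_1z_1)=\half\gd_{12}\gd_1\,u_1$.  Part (8) is obtained by expanding $u_1(u_1u_2)=\half\gd_{12}u_1+u_1z_{12}$, multiplying by $z_1$, and combining (7) with $u_1z_1=\half\gd_1u_1$ to collect the two halves of $\gd_{12}\gd_1\,u_1$.
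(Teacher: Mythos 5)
Your proposal is correct and follows essentially the same route as the paper: parts (1)--(3) by expanding $(u_1+u_2)^2$ and $(u_1+u_2)^3$ with Theorem \ref{thm identities}(1), the key separation of the degree-$(2,1)$ component by a linearization/scaling argument justified via strict validity, and then (6)--(8) by substituting $u_1^2=\gd_1e+z_1$, multiplying by $z_1$, and using Theorem \ref{thm identities}(2) together with $u_1z_1=\half\gd_1u_1$ and $z_1^2=\gd_1z_1$ from Lemma \ref{lem u z1}. The only cosmetic difference is that you obtain (5) first by directly linearizing $u^3=\gd_{u^2}u$ and deduce (4) from it, whereas the paper extracts (4) from (3) by rescaling $u_1\mapsto\ga u_1$ and then derives (5); the two orderings are equivalent.
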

\begin{proof}
(1):\quad We have 
\begin{align*}
\gd_{1+2}e+z_{1+2}&=(u_1+u_2)^2=u_1^2+u_2^2+2u_1u_2\\
&=(\gd_1+\gd_2+2\gd_{12})e+z_1+z_2+2z_{12}.
\end{align*}
\medskip

\noindent
(2):\quad We have
\begin{align*}
\gd_{1+2}(u_1+u_2)&=(u_1+u_2)^3=u_1^3+u_2^3+u_1^2u_2+u_2^2u_1+2(u_1u_2)u_1+2(u_1u_2)u_2\\
&=\gd_1u_1+\gd_2u_2+u_1^2u_2+u_2^2u_1+\gd_{12}u_1+2z_{12}u_1+\gd_{12}u_2+2z_{12}u_2.
\end{align*}
\medskip

\noindent
(3):\quad Just replace $\gd_{1+2}$ by $\gd_1+\gd_2+2\gd_{12}$ in (2).
\medskip

\noindent
(4):\quad Replace $u_1$ with $\ga u_1$ in (3), where $0\ne\ga\in\ff$.  Then $\gd_1$ should be replaced with $\ga^2\gd_1,$
$\gd_{12}$ with $\ga\gd_{12}$ and $z_{12}$ with $\ga z_{12}$.  We get from (3),
\[
\ga^2u_1^2u_2+\ga u_2^2u_1-\ga^2\gd_{12}u_1-\ga\gd_2u_1-\ga\gd_{12}u_2-\ga^2\gd_1u_2+2\ga z_{12}(\ga u_1+u_2)=0.
\]
dividing by $\ga$ we see that
\[\tag{$*$}
\ga u_1^2u_2+u_2^2u_1-\ga\gd_{12}u_1-\gd_2u_1-\gd_{12}u_2-\ga\gd_1u_2+2 z_{12}(\ga u_1+u_2)=0.
\]
Since $\ga$ is arbitrary we get (4). 
\medskip

\noindent
(5):\quad
Replacing in (4) $z_{12}=u_1u_2-\gd_{12}e$ we get
\begin{align*}
&u_1^2u_2-\gd_{12}u_1-\gd_1u_2+2(u_1u_2-\gd_{12}e)u_1=0\\
&\iff 2u_1(u_1u_2)=-u_1^2u_2+\gd_1u_2+2\gd_{12}u_1.
\end{align*}
\medskip

\noindent
(6):  This is obtained from (4) by replacing $u_1^2$ with $\gd_1 e+z_1.$
\medskip

\noindent
(7):\quad
Now multiply (6) by $z_1$.  We get
\begin{align*}
&(u_2z_1)z_1-\gd_{12}u_1z_1-\half\gd_1u_2z_1+2(u_1z_{12})z_1=0\\
&\overset{(i)}{\iff} \half u_2z_1^2-\half\gd_1\gd_{12}u_1-\half\gd_1u_2z_1+2(u_1z_{12})z_1=0\\
&\overset{(ii)}{\iff} \half \gd_1 u_2z_1-\half\gd_1\gd_{12}u_1-\half\gd_1 u_2z_1+2(u_1z_{12})z_1=0\\
&\iff -\half\gd_1\gd_{12}u_1+2(u_1z_{12})z_1=0.
\end{align*}
Here (i) holds because by Lemma \ref{lem u z1}, $u_1z_1=\half\gd_1u_1,$
and (ii) holds by Lemma \ref{lem u z1}.
\medskip

\noindent
(8):\quad
Replacing   $z_{12}$ by $u_1u_2-\gd_{12}e$ in (7), we get
\begin{align*}
&2(u_1z_{12})z_1=\half\gd_{12}\gd_1 u_1\\
&\iff 2(u_1(u_1u_2-\gd_{12}e))z_1=\half\gd_{12}\gd_1 u_1\\
&\iff 2(u_1(u_1u_2))z_1-\gd_{12}u_1z_1=\half\gd_{12}\gd_1 u_1\\
&\overset{(i)}{\iff} 2(u_1(u_1u_2))z_1-\half  \gd_{12}\gd_1u_1=\half\gd_{12}\gd_1 u_1\\
&\iff 2(u_1(u_1u_2))z_1=\gd_{12}\gd_1 u_1.
\end{align*}
Where (i) holds by Lemma \ref{lem u z1}.
\end{proof}

\begin{lemma}\label{lem u_1u_2u_3}
Let $u_1, u_2, u_3\in U$.  Then
\begin{enumerate}
\item
$u_1(u_2u_3)+(u_1u_2)u_3+(u_1u_3)u_2=\gd_{u_2u_3}u_1+\gd_{u_1u_3}u_2+\gd_{u_1u_2}u_3.$

\item
$u_1^2u_2+2u_1(u_1u_2)=\gd_{u_1^2}u_2+2\gd_{u_1u_2}u_1.$

\item
$u_1^2(u_2z)+2u_1(u_1(u_2z))=\gd_{u_1^2}u_2z+2\gd_{u_1(u_2z)}u_1.$

\item
$(u_1^2u_2)z=-2u_1((u_1u_2)z)+2(u_1u_2)(u_1z) +\gd_{u_1^2}u_2z.$

\item
$2u_1^2(u_1z)=\gd_{u_1^2}u_1z+\gd_{u_1(u_1z)}u_1.$
\end{enumerate}
\end{lemma}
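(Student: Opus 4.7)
My plan is to derive the five identities sequentially, treating $(1)$ and $(2)$ as linearizations of Theorem \ref{thm identities}$(1)$ (the cubic identity $u^3 = \gd_{u^2}u$), and then using $(2)$ together with Theorem \ref{thm identities}$(6)$ and Lemma \ref{lem u(z_1z_2)}$(1)$ to deduce the remaining identities. Since the strict hypothesis is in force, the linearizations of Theorem \ref{thm identities}$(1)$ in two and three directions are valid identities in $A$; substituting $u \mapsto u_1 + u_2$ in $u^3=\gd_{u^2}u$, expanding by commutativity, subtracting the diagonal instances for $u_1$ and $u_2$, and extracting the part of degree $2$ in $u_1$ and $1$ in $u_2$ gives $(2)$. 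Substituting $u \mapsto u_1 + u_2 + u_3$ and extracting the part of degree $1$ in each $u_i$ gives $(1)$. Identity $(3)$ is then immediate from $(2)$: by the fusion rule $U Z \subseteq U$ we have $u_2 z \in U$, so $(2)$ applies verbatim with $u_2$ replaced by $u_2 z$.

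For $(4)$, I would decompose $u_1 u_2 = \gd_{u_1u_2} e + z_{u_1u_2}$ with $z_{u_1u_2} \in Z$, so that $u_1(u_1 u_2) = \half \gd_{u_1u_2} u_1 + u_1 z_{u_1u_2}$; substituting this into $(2)$ gives
\[
u_1^2 u_2 \;=\; \gd_{u_1^2} u_2 + \gd_{u_1u_2} u_1 - 2\, u_1 z_{u_1u_2}.
\]
Multiplying by $z$ and applying Lemma \ref{lem u(z_1z_2)}$(1)$ to $(u_1 z_{u_1u_2}) z$, I obtain $(u_1 z_{u_1u_2}) z = u_1(z_{u_1u_2} z) - (u_1 z) z_{u_1u_2}$. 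Since $e z = 0$, the factor $z_{u_1u_2} z$ collapses to $(u_1 u_2) z$, and $(u_1 z) z_{u_1u_2}$ simplifies to $(u_1 u_2)(u_1 z) - \half \gd_{u_1u_2} u_1 z$ (using $(u_1 z) e = \half u_1 z$ since $u_1 z \in U$). Collecting terms, the two $\gd_{u_1u_2} u_1 z$ contributions cancel exactly and $(4)$ emerges.

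For $(5)$ I would specialize $(3)$ to $u_2 = u_1$ and then simplify the cubic term $u_1(u_1(u_1 z))$. Theorem \ref{thm identities}$(6)$ yields $u_1(u_1 z) = \gd_{u_1(u_1z)} e + \half u_1^2 z$, and therefore $u_1(u_1(u_1 z)) = \half \gd_{u_1(u_1z)} u_1 + \half u_1(u_1^2 z)$. The remaining work is the subclaim $u_1(u_1^2 z) = u_1^2(u_1 z)$, which I would establish by writing $u_1^2 z = z_{u_1^2} z$ (again using $ez=0$), invoking Lemma \ref{lem u(z_1z_2)}$(1)$ to split $u_1(z_{u_1^2} z) = (u_1 z_{u_1^2}) z + (u_1 z) z_{u_1^2}$, and reducing $u_1 z_{u_1^2}$ to $\half \gd_{u_1^2} u_1$ via $u_1^3 = \gd_{u_1^2} u_1$ from Theorem \ref{thm identities}$(1)$. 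Plugging back into $(3)$ with $u_2 = u_1$ collapses the relation to $2 u_1^2(u_1 z) = \gd_{u_1^2} u_1 z + \gd_{u_1(u_1z)} u_1$, which is $(5)$.

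The main obstacle here is not conceptual but bookkeeping: identities $(4)$ and $(5)$ each involve manipulations with two different $Z$-elements (the external $z$ and the internal $z_{u_1u_2}$ or $z_{u_1^2}$), and the clean cancellations rely on tracking $e$-components and $Z$-components through several applications of commutativity, the fusion rules, and Lemma \ref{lem u(z_1z_2)}$(1)$. I would keep the decomposition $U \cdot U \subseteq \ff e \oplus Z$ explicit at every step so that no $\gd$-coefficients or factors of $\half$ get lost in the rearrangement.
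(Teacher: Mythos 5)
Your proposal is correct and follows essentially the same route as the paper: parts (1) and (2) are the two- and three-variable linearizations of $u^3=\gd_{u^2}u$ (the paper packages these as Lemma \ref{lem u_1(u_1u_2)}(5) and then Lemma \ref{lem u_1u_2u_3}(1)), part (3) is the substitution $u_2\mapsto u_2z$, part (4) is multiplication of (2) by $z$ followed by Lemma \ref{lem u(z_1z_2)}(1) with the same cancellation of the $\gd_{u_1u_2}u_1z$ terms, and part (5) specializes (3) and uses Theorem \ref{thm identities}(6) together with $u_1(u_1^2z)=u_1^2(u_1z)$, which is exactly the paper's Lemma \ref{lem jordan u z} proved by the argument you sketch. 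The only point worth tightening is the justification for extracting homogeneous components in (1) and (2): this rests not on strictness per se but on the scalar-substitution (Vandermonde) device the paper uses in Lemma \ref{lem u_1(u_1u_2)}(4), which is available since $\charc\ff\ne 2$ gives at least two distinct nonzero scalars.
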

\begin{proof}
(1):\quad
Put $u_2+u_3$ in place of $u_1,$  and $u_1$ in place of $u_2$ in Lemma \ref{lem u_1(u_1u_2)}(5) to get:
\begin{align*}
&2(u_1(u_2+u_3))(u_2+u_3)=\\
&-u_1(u_2+u_3)^2+\gd_{(u_2+u_3)^2}u_1+2\gd_{u_1u_2}(u_2+u_3)+2\gd_{u_1u_3}(u_2+u_3)\\
&\iff\\
&2u_2(u_2u_1) +2u_3(u_3u_1) +2(u_1u_2)u_3+2(u_1u_3)u_2\\
&=-u_2^2u_1 -u_3^2u_1-2u_1(u_2u_3)+\gd_{u_2^2}u_1+\gd_{u_3^2}u_1+2\gd_{u_2u_3}u_1\\
&+2\gd_{u_1u_2}u_2+2\gd_{u_1u_2}u_3+2\gd_{u_1u_3}u_3+2\gd_{u_1u_3}u_2\\
&\overset{(i)}{\iff}\\
&2(u_1u_2)u_3+2(u_1u_3)u_2=-2u_1(u_2u_3) +2\gd_{u_2u_3}u_1+2\gd_{u_1u_3}u_2+2\gd_{u_1u_2}u_3\\
&\iff\\
&u_2(u_3u_1)+u_3(u_2u_1)+(u_2u_3)u_1=\gd_{u_2u_3} u_1+\gd_{u_1u_3}u_2+\gd_{u_1u_2}u_3.
\end{align*}
Where $(i)$ is obtained by applying Lemma \ref{lem u_1(u_1u_2)}(5) twice.
\medskip

\noindent
(2):\quad
This is obtained by replacing $u_2$ with $u_1$ and $u_3$ with $u_2$ in (1)
\medskip

\noindent
(3):\quad
Put  $u_2z$ in place of $u_2$ in (2).  
\medskip

\noindent
(4):\quad
Multiplying (2) by $z$ we have
\begin{align*}
&(u_1^2u_2)z=-2(u_1(u_1u_2))z+\gd_{u_1^2}u_2z+2\gd_{u_1u_2}u_1z\\
&=-2(u_1(\gd_{u_1u_2}e+z_{u_1u_2}))z+\gd_{u_1^2}u_2z+2\gd_{u_1u_2}u_1z\\
&=-\gd_{u_1u_2}u_1z-2(u_1z_{u_1u_2})z+\gd_{u_1^2}u_2z+2\gd_{u_1u_2}u_1z\\
&\overset{(i)}{=}-2u_1(z_{u_1u_2}z)+2(u_1z)z_{u_1u_2}+\gd_{u_1^2}u_2z+\gd_{u_1u_2}u_1z\\
&=-2u_1((u_1u_2)z)+2(u_1z)(u_1u_2-\gd_{u_1u_2}e)+\gd_{u_1^2}u_2z+\gd_{u_1u_2}u_1z\\
&=-2u_1((u_1u_2)z)+2(u_1z)(u_1u_2)+\gd_{u_1^2}u_2z.
\end{align*}
Where $(i)$ is obtained by using Lemma \ref{lem u(z_1z_2)}(1).
\medskip

\noindent
(5):\quad
Put $u_1$ in place of $u_2$ in (3).  We get
\begin{align*}
&u_1^2(u_1z)+2u_1(u_1(u_1z))=\gd_{u_1^2}u_1z+2\gd_{u_1(u_1z)}u_1\\
&\overset{(i)}{\iff} u_1^2(u_1z)+2u_1(\gd_{u_1(u_1z)}e+\half u_1^2z)=\gd_{u_1^2}u_1z+2\gd_{u_1(u_1z)}u_1\\
&\iff u_1^2(u_1z)+\gd_{u_1(u_1z)}u_1+u_1(u_1^2z)=\gd_{u_1^2}u_1z+2\gd_{u_1(u_1z)}u_1\\
&\overset{(ii)}{\iff} 2u_1^2(u_1z)=\gd_{u_1^2}u_1z+\gd_{u_1(u_1z)}u_1.
\end{align*}
Here $(i)$ holds by Theorem \ref{thm identities}(6), and $(ii)$ is by Lemma \ref{lem jordan u z}
below. (Indeed Lemma \ref{lem jordan u z} naturally belongs in \S 6.)
\end{proof}

\begin{lemma}\label{lem (uz)^2}
For $u, u_1, u_2\in U,\  z, z_1, z_2\in Z$ we have  
\begin{enumerate}
\item
$2(uz)^2+u^2z^2=u(uz^2)+(u^2z)z.$

\item
$4(u_1z)(u_2z)+2(u_1u_2)z^2=u_1(u_2z^2)+u_2(u_1z^2)+2((u_1u_2)z)z.$

\item
$4(uz_1)(uz_2)+2u^2(z_1z_2)=2u(u(z_1z_2))+(u^2z_1)z_2+(u^2z_2)z_1.$

\item
$2(uz)^2z+\half u^2z^2=((u^2z)z)z.$

\item
$\gd_{u(uz^2)}=2\gd_{(uz)^2}.$

\item
$4\gd_{(u_1z)(u_2z)}=\gd_{u_1(u_2z^2)+u_2(u_1z^2)}.$

\item
$2\gd_{(uz_1)(uz_2)}=\gd_{u(u(z_1z_2))}.$
\end{enumerate}
\end{lemma}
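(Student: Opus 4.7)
The plan is to establish (1) first, to obtain (2) and (3) as linearizations of (1) in $u$ and $z$ respectively, to derive (4) by right-multiplying (1) by $z$, and to read off (5)--(7) as the $\gvp_e$-components of (1)--(3).

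I would begin with (1) by starting from Theorem \ref{thm identities}(4),
\[
4(uz)^2 + 2u^2z^2 = u(uz^2) + 2(u(uz))z + 2u((uz)z) + (u^2z)z,
\]
and simplifying the two mixed terms using the earlier parts of Theorem \ref{thm identities}. By part (2), $(uz)z = \half uz^2$, so $2u((uz)z) = u(uz^2)$. By part (6), $u(uz) = \gvp_e(u(uz))\,e + \half u^2z$, and since $ez = 0$ this gives $2(u(uz))z = (u^2z)z$. Substituting these into Theorem \ref{thm identities}(4) and dividing by $2$ produces (1).

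Identities (2) and (3) then follow by linearization. For (2), substitute $u \mapsto u_1 + u_2$ into (1), expand, and subtract the two $u_i$-only copies of (1); the cross terms are precisely (2). For (3), do the analogous substitution $z \mapsto z_1 + z_2$ and collect the cross terms. For (4), multiply (1) on the right by $z$; the term $(u(uz^2))z$ simplifies via Theorem \ref{thm identities}(6) applied to $uz^2 \in U$, namely $u(uz^2) \in \gvp_e(u(uz^2))\,e + \half u^2z^2$, so that (using $ez=0$) $(u(uz^2))z = \half (u^2z^2)z$, and rearranging gives (4).

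Finally, for (5)--(7), I apply $\gvp_e$ to (1), (2), (3) in turn. Using the fusion rules $Z^2 \subseteq Z$, $UZ \subseteq U$, and $U^2 \subseteq \ff e + Z$, the terms $u^2z^2$, $(u^2z)z$, $(u_1u_2)z^2$, $((u_1u_2)z)z$, $u^2(z_1z_2)$, and $(u^2z_i)z_j$ all lie in $Z$ and contribute $0$ to the $e$-part, while $(uz)^2$, $u(uz^2)$, $(u_1z)(u_2z)$, $u_1(u_2z^2)$, $(uz_1)(uz_2)$, and $u(u(z_1z_2))$ lie in $\ff e + Z$ with nontrivial $e$-coefficients; reading off those coefficients yields (5), (6), (7) respectively. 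The main bookkeeping challenge is systematically tracking which summand of $\ff e \oplus U \oplus Z$ each intermediate product lands in, and invoking Theorem \ref{thm identities}(6) in the right places to extract $e$-parts; no truly new computation beyond Theorem \ref{thm identities} is required.
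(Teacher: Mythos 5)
Your proposal is correct and follows essentially the same route as the paper: part (1) from Theorem \ref{thm identities}(4) via parts (2) and (6) of that theorem, parts (2) and (3) by linearizing (1) in $u$ and $z$ respectively, part (4) by multiplying (1) by $z$ and using Theorem \ref{thm identities}(6), and parts (5)--(7) by reading off $e$-coefficients using the fusion rules. Note that the version of (4) you derive, $2(uz)^2z+\half(u^2z^2)z=((u^2z)z)z$, is the correct one (the statement as printed drops a factor of $z$; compare equation \eqref{eq lem Z 1} in the proof of Lemma \ref{lem jordan z z}).
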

\begin{proof}
Part (1) follows from Theorem \ref{thm identities}(4) using Theorem \ref{thm identities}(2\&6).  
For part (2)
replace $u$ by $u_1+u_2$ is (1), and then use (1) twice, to get:
\begin{align*}
&2((u_1+u_2)z)^2+(u_1+u_2)^2z^2=(u_1+u_2)((u_1+u_2)z^2)+((u_1+u_2)^2z)z\\
&\iff 2(u_1z)^2+2(u_2z)^2+4(u_1z)(u_2z)+u_1^2z^2+u_2^2z^2+2(u_1u_2)z^2\\
&=u_1(u_1z^2)+u_2(u_2z^2)+u_1(u_2z^2)+u_2(u_1z^2)+(u_1^2z)z+ (u_2^2z)z+2((u_1u_2)z)z\\
&\iff 4(u_1z)(u_2z)+2(u_1u_2)z^2=u_1(u_2z^2)+u_2(u_1z^2)+2((u_1u_2)z)z.
\end{align*}
This shows (2).
\medskip

\noindent
(3):\quad
Replacing $z$ with $z_1+z_2$ in (1) we get
\[
2(u(z_1+z_2))^2+u^2(z_1+z_2)^2=u(u(z_1+z_2)^2)+(u^2(z_1+z_2))(z_1+z_2).
\]
Or
\begin{align*}
&2(uz_1)^2+2(uz_2)^2+4(uz_1)(uz_2)+u^2z_1^2+u^2z_2^2+2u^2(z_1z_2)\\
&=u(uz_1^2)+u(uz_2^2)+2u(u(z_1z_2))+(u^2z_1)z_1+(u^2z_2)z_2+(u^2z_1)z_2+(u^2z_2)z_1.
\end{align*}
So using (1) we get (3).
\medskip

\noindent
(4):\quad
Multiply (1) by $z$ and note that $(u(uz^2))z=\half u^2z^2,$ by Theorem
\ref{thm identities}(6).

Parts (5), (6) and (7) are consequences of the previous parts.
\end{proof}

\section{Half-axes in $A$}

Let $e\ne f\in A$ be another half-axis in $A$ and write
\[
f=\gc e+u_1+z\qquad\gc\in\ff,\ u_1\in U,\ z\in Z.
\]
Thus the subalgebra $A_{e,f}$ of $A$ generated by $e$ and $f$
is a primitive axial algebra of Jordan type half.
We use \cite{hss1} to deduce information on $A_{e,f}$.
We use the notation of \cite{hss1}.
Let
\[
\gs:=\gs_{e,f},\ \pi:=\pi_{e,f},\ \gd_1:=\gd_{u_1^2}\text{ and }z_1:=z_{u_1^2}.
\]

\begin{lemma}\label{lem f^2=f}
We have
\begin{enumerate}
\item
$u_1z=-\half(\gc-1)u_1;$

\item
$z_1=z-z^2;$

\item
$\gc-\gc^2=\gd_1.$
\end{enumerate}
\end{lemma}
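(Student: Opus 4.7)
The plan is entirely a direct computation: expand $f^2$ using the Peirce decomposition $A=\ff e\oplus U\oplus Z$ relative to $e$, exploit $f^2=f$, and match components. The statement contains three separate identities, one for each piece of the decomposition.

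Concretely, I would write $f=\gc e+u_1+z$ and compute
\[
f^2=\gc^2 e^2+2\gc(eu_1)+2\gc(ez)+u_1^2+2u_1z+z^2.
\]
Applying the half-axis rules for $e$: $e^2=e$, $eu_1=\half u_1$ (since $u_1\in U=A_{1/2}(e)$), $ez=0$ (since $z\in Z=A_0(e)$), together with $u_1^2=\gd_1 e+z_1$, this becomes
\[
f^2=(\gc^2+\gd_1)e+(\gc u_1+2u_1z)+(z_1+z^2).
\]
Note that $u_1z\in U$ and $z^2\in Z$ by the fusion rules $UZ\subseteq U$ and $Z^2\subseteq Z$, so this display already exhibits $f^2$ in its Peirce decomposition with respect to $e$.

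Now equate $f^2$ with $f=\gc e+u_1+z$ componentwise. The $\ff e$-component gives $\gc^2+\gd_1=\gc$, which is (3). The $U$-component gives $\gc u_1+2u_1z=u_1$, equivalently $u_1z=-\half(\gc-1)u_1$, which is (1). The $Z$-component gives $z_1+z^2=z$, which is (2). There is no real obstacle here; the only thing to be careful about is using that $f$ being a half-axis (in particular, an idempotent) is what drives the argument, and that the decomposition we write down for $f^2$ is genuinely the Peirce decomposition relative to $e$, not relative to $f$.
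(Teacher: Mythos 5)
Your proof is correct and follows exactly the paper's argument: expand $f^2$ using the Peirce rules for $e$, rewrite it as $(\gc^2+\gd_1)e+(\gc u_1+2u_1z)+(z_1+z^2)$, and compare components with $f=\gc e+u_1+z$. Nothing further is needed.
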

\begin{proof}
We have
\begin{gather*}
f^2=\gc^2 e+u_1^2+z^2+\gc u_1+2u_1z=(\gc^2+\gd_1)e+(\gc u_1+2u_1z)+(z_1+z^2).
\end{gather*}
Since $f^2=f=\gc e+u_1+z,$ the lemma follows. 
\end{proof}

\begin{prop}\label{prop f^2=f}
We have
\begin{enumerate}
\item 
$\gs=\frac{\gc -1}{2}e-\half z$

\item
$\pi=\frac{\gc-1}{2}.$

\item
$z^2=(1-\gc)z.$

\item
$z_1=\gc z,$ in particular

\item
if $\gc\ne 0,$ then $u_1\ne 0,$ and $z=\frac{u_1^2}{\gc}-\frac{\gd_1}{\gc}e.$
 
\item
If $\gc=0,$  then $f=u_1+z,$ with $u_1^2=0$ and $zu_1=\half u_1$.
In particular $u_1\in A_{1/2}(f)$. 
\end{enumerate}
\end{prop}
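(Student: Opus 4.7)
The plan is to exploit the explicit description of $\sigma_{e,f}$ and $\pi_{e,f}$ given in \cite{hss1} for the $2$-generated primitive axial subalgebra $A_{e,f}$ of Jordan type half. Under the usual convention one has $\sigma = ef - \half(e+f) \in \ff e + Z$, with $e\sigma = \pi e$ and, by the symmetry $e \leftrightarrow f$, also $f\sigma = \pi f$. Granted these relations, every assertion reduces to decomposing elements along $A = \ff e \oplus U \oplus Z$ and applying the $e$-fusion rules together with Lemma \ref{lem f^2=f}.

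For (1), expand $ef$ using $e\cdot e = e$, $eu_1 = \half u_1$ and $ez = 0$ to obtain $ef = \gc e + \half u_1$, so that
\[
\sigma = ef - \half(e+f) = \tfrac{\gc-1}{2}\,e - \half z.
\]
Multiplying by $e$ gives $e\sigma = \tfrac{\gc-1}{2}\,e$; equating with $\pi e$ yields (2). For (3), compute
\[
f\sigma = \tfrac{\gc-1}{2}\,fe - \half fz,
\]
expand $fe = ef = \gc e + \half u_1$ and $fz = u_1 z + z^2$, and replace $u_1 z$ using Lemma \ref{lem f^2=f}(1). The result is
\[
f\sigma = \tfrac{\gc(\gc-1)}{2}\,e + \tfrac{\gc-1}{2}\,u_1 - \half z^2;
\]
matching the $Z$-component with that of $\pi f = \tfrac{\gc-1}{2}(\gc e + u_1 + z)$ yields $z^2 = (1-\gc)z$, which is (3). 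Part (4) is then immediate from Lemma \ref{lem f^2=f}(2): $z_1 = z - z^2 = \gc z$.

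The last two items are short. For (5), assume $\gc \ne 0$. By (4), $u_1^2 = \gd_1 e + z_1 = \gd_1 e + \gc z$, giving $z = \gc^{-1}(u_1^2 - \gd_1 e)$. If $u_1 = 0$ then $\gd_1 = z_1 = 0$, whence $\gc z = 0$ forces $z = 0$, so $f = \gc e$; the idempotency $f^2 = f$ then forces $\gc = 1$ and $f = e$, contradicting $e \ne f$. For (6), with $\gc = 0$, Lemma \ref{lem f^2=f}(3) gives $\gd_1 = 0$ and part (4) gives $z_1 = 0$, hence $u_1^2 = 0$; Lemma \ref{lem f^2=f}(1) specialises to $u_1 z = \half u_1$, so $fu_1 = u_1^2 + zu_1 = \half u_1$, placing $u_1 \in A_{1/2}(f)$. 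The main obstacle is purely notational: one must pin down the precise conventions for $\sigma_{e,f}$ and $\pi_{e,f}$ from \cite{hss1}; once the relations $e\sigma = \pi e$ and $f\sigma = \pi f$ are imported, each step is a direct expansion using the $e$-fusion rules and Lemma \ref{lem f^2=f}, with no further identities from \S2 or \S4 required.
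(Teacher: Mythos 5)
The proposal is correct, but part (3) is proved by a genuinely different route from the paper. You derive $z^2=(1-\gc)z$ by computing $f\gs$ and matching $Z$-components against $\pi f$, which requires importing from \cite{hss1} the relation $f\gs_{e,f}=\pi_{e,f}f$, i.e.\ the symmetry $\pi_{e,f}=\pi_{f,e}$. The paper instead uses the relation $\gs^2=\pi\gs$: writing $\gs=\pi e-\half z$ and squaring gives $\pi^2e+\quar z^2=\pi^2e-\half\pi z$, hence $z^2=-2\pi z=(1-\gc)z$. Both arguments lean on nontrivial facts about $\gs_{e,f}$ established in \cite{hss1}; the paper's version does not need the symmetry of $\pi$ in its two arguments and avoids expanding $fz$, while yours avoids squaring $\gs$ at the cost of one extra expansion involving $u_1z$ (your computation of $f\gs$ checks out against Lemma \ref{lem f^2=f}(1)). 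Your treatment of (5) also differs mildly: you conclude $z=0$ and $f=\gc e$ directly from $z_1=\gc z$ and then use idempotency, whereas the paper deduces $\gc=1$ and $z^2=z$ from Lemma \ref{lem f^2=f}(2\&3) and observes that $fe=e$ contradicts $A_1(f)=\ff f$; both are sound. Parts (1), (2), (4) and (6) coincide with the paper's proof.
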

\begin{proof}
(1):\quad
We have $ef=\gc e+\half u_1,$ hence
\begin{gather*}
\gs=ef-\half e-\half f=\gc e+\half u_1-\half e-\half\gc e-\half u_1-\half z\\
=\half (\gc -1)e-\half z.
\end{gather*}
\medskip

\noindent
(2):\quad
$\gs e=\pi e\implies \half(\gc-1)e=\pi e,$ so (2) holds.
\medskip

\noindent
(3):\quad
Note that by (2), $\gs=\pi e-\half z,$ hence
\[
\gs^2=\pi\gs\implies \pi^2 e+\quar z^2=\pi^2 e-\half\pi z.
\]
Hence $\quar z^2=-\half\pi z$.  It follows that $z^2=-2\pi z=(1-\gc)z.$
\medskip

\noindent
(4):\quad
By Lemma \ref{lem f^2=f}(2) and by (3), we get $z_1=z-z^2=z+(\gc-1)z=\gc z.$
\medskip

\noindent
(5):\quad
If $u_1=0,$ then by Lemma \ref{lem f^2=f}(2\&3), $\gc=1$ and $z^2=z$.
But then $f$ is not a half-axis, since $fe=e$.  Now (5) follows from
(4) since $z_1=u_1^2-\gd_1e.$
\medskip

\noindent
(6):\quad
Suppose $\gc=0$.  Then, by Lemma \ref{lem f^2=f}(3), $\gd_1=0$. Also by (4), 
$z_1=0,$ so $u_1^2=0$.  Also, by \ref{lem f^2=f}(1),
$u_1z=\half u_1$.  Thus $fu_1=zu_1=\half u_1,$ and $u_1\in A_{1/2}(f).$
\end{proof}

\section{Special cases of the Jordan identity}

Throughout this section
we assume that the identities $x^2x^2=xx^3$ and  $x^3x^2=xx^4$ hold strictly in $A$.
In this section we deduce certain identities that
are specific cases of the general Jordan identity.
These indicate that $A$ ``tends'' to be a Jordan algebra.

\begin{lemma}\label{lem jordan u z}
Let $u_1\in U$ and $z\in Z,$ then $u_1^2(u_1z)=u_1(u_1^2z).$
\end{lemma}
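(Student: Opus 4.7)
My plan is to specialize the identity in Lemma \ref{lem u_1u_2u_3}(4) to the diagonal case $u_2=u_1$ and read off the result. Lemma \ref{lem u_1u_2u_3}(4) states
\[
(u_1^2u_2)z=-2u_1((u_1u_2)z)+2(u_1u_2)(u_1z)+\gd_{u_1^2}u_2z,
\]
and --- crucially --- its proof invokes only Lemma \ref{lem u_1u_2u_3}(2) together with Lemma \ref{lem u(z_1z_2)}(1), neither of which uses the present lemma. (By contrast, Lemma \ref{lem u_1u_2u_3}(5) \emph{does} rely on Lemma \ref{lem jordan u z}, so I must be careful to avoid that branch of the dependency tree.)

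Substituting $u_2=u_1$ turns $u_1u_2$ into $u_1^2$ and $u_1^2u_2$ into $u_1^3$, producing
\[
u_1^3z=-2u_1(u_1^2z)+2u_1^2(u_1z)+\gd_{u_1^2}u_1z.
\]
At this point I will invoke Theorem \ref{thm identities}(1), which gives $u_1^3=\gd_{u_1^2}u_1$, to rewrite the left-hand side as $\gd_{u_1^2}u_1z$. This is exactly the last term on the right, so the two copies cancel, leaving $0=-2u_1(u_1^2z)+2u_1^2(u_1z)$. Dividing by $2$, which is permissible since $\charc\ff\ne2$, yields the desired identity $u_1^2(u_1z)=u_1(u_1^2z)$.

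There is essentially no obstacle here --- the substitution $u_2=u_1$ was engineered precisely so that the $u_1^3z$ produced on the left coincides, via Theorem \ref{thm identities}(1), with the $\gd_{u_1^2}u_1z$ on the right, making the cancellation exact. The only verification needed in advance is the dependency check mentioned above, ensuring the argument does not reach back to Lemma \ref{lem jordan u z} through a side door such as Lemma \ref{lem u_1u_2u_3}(5). A naive alternative route using only Theorem \ref{thm identities}(3) together with Lemma \ref{lem u_1u_2u_3}(3) also works, but it produces $3u_1^2(u_1z)=3u_1(u_1^2z)$ and therefore requires $\charc\ff\ne3$; the route via Lemma \ref{lem u_1u_2u_3}(4) sidesteps this issue entirely.
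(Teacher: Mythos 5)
Your proof is correct and non-circular, but it takes a genuinely different route from the paper's. You specialize the polarized identity of Lemma \ref{lem u_1u_2u_3}(4) at $u_2=u_1$, obtaining
\[
u_1^3z=-2u_1(u_1^2z)+2u_1^2(u_1z)+\gd_{u_1^2}u_1z,
\]
and cancel the outer terms via Theorem \ref{thm identities}(1); the paper instead argues directly, writing $u_1^2=\gd_{u_1^2}e+z_{u_1^2}$, using Lemma \ref{lem u z1} to convert $\half\gd_{u_1^2}u_1z$ into $(u_1z_{u_1^2})z$, and then observing that both $u_1^2(u_1z)$ and $u_1(u_1^2z)=u_1(z_{u_1^2}z)$ equal $(u_1z_{u_1^2})z+(u_1z)z_{u_1^2}$ by the derivation-type identity of Lemma \ref{lem u(z_1z_2)}(1). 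The two arguments rest on the same ultimate ingredients --- Lemma \ref{lem u(z_1z_2)}(1) is exactly what powers step $(i)$ in the paper's proof of Lemma \ref{lem u_1u_2u_3}(4) --- so yours is essentially a ``top-down'' repackaging that reuses established machinery rather than redoing the decomposition by hand. What your route buys is brevity and a clean cancellation; what it costs is precisely the dependency audit you performed, and you performed it correctly: only part (5) of Lemma \ref{lem u_1u_2u_3} forward-references the present lemma, while part (4) is derived from part (2) and Lemma \ref{lem u(z_1z_2)}(1) alone, so there is no circularity. (I did not verify your closing aside about the alternative route through Theorem \ref{thm identities}(3) and Lemma \ref{lem u_1u_2u_3}(3); it plays no role in the argument.)
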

\begin{proof}
By Lemma \ref{lem u z1} we have
\[
u_1^2(u_1z)=(\gd_{u_1^2}e+z_{u_1^2})u_1z=\half \gd_{u_1^2} u_1z+(u_1z)z_{u_1^2}=(u_1z_{u_1^2})z+(u_1z)z_{u_1^2}.
\]
But also $u_1(u_1^2z)=u_1(z_{u_1^2}z)$.  So the lemma follows from
Lemma \ref{lem u(z_1z_2)}(1).
\end{proof}

\begin{lemma}\label{lem u u}
Let $u_1, u_2\in U$ and write $u_1^2=\gd_1 e+z_1.$  
Assume that $u_1^2\notin Z$.  Then
\[
\gd_{u_1(u_2z_1)}=\half \gd_1\gd_{u_1u_2}.
\]
\end{lemma}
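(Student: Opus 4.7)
The plan is to reduce the claim to computing $\gd_{u_1(u_1 z_{12})}$, where $z_{12} := z_{u_1 u_2}$ and $\gd_{12} := \gd_{u_1 u_2}$, and then to evaluate that $e$-coefficient by means of Lemma~\ref{lem u_1u_2u_3}(5). First I would use Lemma~\ref{lem u_1(u_1u_2)}(6) to write
\[
u_2 z_1 = \gd_{12} u_1 + \half \gd_1 u_2 - 2 u_1 z_{12},
\]
multiply through by $u_1$, and apply $\gvp_e$. Using $\gvp_e(u_1^2) = \gd_1$ and $\gvp_e(u_1 u_2) = \gd_{12}$, this gives
\[
\gd_{u_1(u_2 z_1)} = \gd_1 \gd_{12} + \half \gd_1 \gd_{12} - 2\gd_{u_1(u_1 z_{12})} = \thalf \gd_1 \gd_{12} - 2 \gd_{u_1(u_1 z_{12})}.
\]
So it suffices to establish $\gd_{u_1(u_1 z_{12})} = \half \gd_1 \gd_{12}$.

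For this I invoke Lemma~\ref{lem u_1u_2u_3}(5) with $z := z_{12}$, which reads
\[
2 u_1^2 (u_1 z_{12}) = \gd_1 \, u_1 z_{12} + \gd_{u_1(u_1 z_{12})} u_1.
\]
The left-hand side can be expanded independently: since $u_1 z_{12} \in U$, one has $e(u_1 z_{12}) = \half u_1 z_{12}$, and by Lemma~\ref{lem u_1(u_1u_2)}(7), $(u_1 z_{12}) z_1 = \quar \gd_1 \gd_{12} u_1$. Combining these with $u_1^2 = \gd_1 e + z_1$ yields
\[
2 u_1^2 (u_1 z_{12}) = \gd_1 u_1 z_{12} + \half \gd_1 \gd_{12} u_1.
\]
Comparing the two expressions forces $\gd_{u_1(u_1 z_{12})} u_1 = \half \gd_1 \gd_{12} u_1$.

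Finally, the hypothesis $u_1^2 \notin Z$ forces $u_1 \ne 0$ (otherwise $u_1^2 = 0 \in Z$), so cancelling $u_1$ yields $\gd_{u_1(u_1 z_{12})} = \half \gd_1 \gd_{12}$, and substituting back gives
\[
\gd_{u_1(u_2 z_1)} = \thalf \gd_1 \gd_{12} - \gd_1 \gd_{12} = \half \gd_1 \gd_{12},
\]
as required. The main obstacle is not really computational; it is spotting that Lemma~\ref{lem u_1u_2u_3}(5) is precisely the instrument that isolates $\gd_{u_1(u_1 z)}$ from a quantity, namely $u_1^2(u_1 z)$, which can be expanded independently using the known form of $u_1^2$ together with the already-computed Lemma~\ref{lem u_1(u_1u_2)}(7). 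The hypothesis $u_1^2 \notin Z$ then enters only at the very end, purely to justify the final cancellation.
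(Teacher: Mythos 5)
Your argument is correct, and it is genuinely different from the one in the paper. The paper gets the identity in one stroke from Lemma \ref{lem (uz)^2}(6): specializing that relation at $z=z_1$ and using $u_1z_1=\half\gd_1u_1$ and $z_1^2=\gd_1z_1$ (from Lemma \ref{lem u z1}) yields $2\gd_1\gd_{u_1(u_2z_1)}=\gd_1\gd_{u_1(u_2z_1)}+\half\gd_1^2\gd_{u_1u_2}$, and then the hypothesis $u_1^2\notin Z$ is used to cancel the factor $\gd_1$. You instead route everything through $z_{12}=z_{u_1u_2}$: Lemma \ref{lem u_1(u_1u_2)}(6) converts $u_2z_1$ into an expression in $u_1$, $u_2$ and $u_1z_{12}$, and the cubic relation $2u_1^2(u_1z)=\gd_{u_1^2}\,u_1z+\gd_{u_1(u_1z)}u_1$ of Lemma \ref{lem u_1u_2u_3}(5), evaluated at $z=z_{12}$ and compared with the direct expansion of $u_1^2(u_1z_{12})$ via Lemma \ref{lem u_1(u_1u_2)}(7), isolates $\gd_{u_1(u_1z_{12})}=\half\gd_1\gd_{12}$. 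I checked that none of the lemmas you invoke depends on Lemma \ref{lem u u}, so there is no circularity. The trade-off: your proof is longer and leans on heavier machinery from Section 4, but it uses the hypothesis $u_1^2\notin Z$ only to guarantee $u_1\ne 0$, whereas the paper's proof needs the stronger consequence $\gd_1\ne 0$ in order to divide by $\gd_1$. In particular your argument actually proves the formula under the weaker assumption $u_1\ne 0$ (so also when $\gd_1=0$), which is mildly stronger than the stated lemma and would, for instance, recover the vanishing $\gvp_e(u_1(u_2u_1^2))=0$ in the case $u_1^2\in Z$ without appealing to the hypothesis $(*)$ of Section 8.
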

\begin{proof}
Recall from Lemma \ref{lem u z1}
that $z_1^2=\gd_1 z_1$ and $u_1z_1=\half \gd_1u_1$.
Hence also $u_1z_1^2=\half \gd_1^2 u_1$.
Thus, by Lemma \ref{lem (uz)^2}(6), with $z=z_1$ we have
\begin{align*}
&4\gd_{(u_1z_1)(u_2z_1)}=\gd_{u_1(u_2z_1^2)}+\gd_{u_2(u_1z_1^2)}\\
&\iff 2\gd_1\gd_{u_1(u_2z_1)}=\gd_1\gd_{u_1(u_2z_1)}+\half\gd_1^2\gd_{u_1u_2}\\
&\iff 2\gd_{u_1(u_2z_1)}=\gd_{u_1(u_2z_1)}+\half\gd_1\gd_{u_1u_2}.\qedhere
\end{align*}
\end{proof}

\begin{lemma}\label{lem jordan u u}
Let $u_1\in U$ such that $u_1^2\notin Z$.  Then $u_1^2(u_1u_2)=u_1(u_1^2u_2).$
\end{lemma}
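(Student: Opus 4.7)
The plan is to reduce the identity $u_1^2(u_1u_2)=u_1(u_1^2u_2)$ to an explicit formula for $u_1(u_2 z_1)$, and then to extract that formula from Lemma~\ref{lem (u_1u_2)z} combined with Lemma~\ref{lem u u}; the latter is where the hypothesis $u_1^2\notin Z$ enters.

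First I would set $u_1 u_2=\gd_{12}e+z_{12}$, with $\gd_{12}:=\gd_{u_1u_2}$ and $z_{12}:=z_{u_1u_2}$. Expanding both sides directly, using $ez=0$ for $z\in Z$ and $eu_2=\half u_2$, one obtains
\[
u_1^2(u_1u_2)=\gd_1\gd_{12}e+z_1z_{12}
\qquad\text{and}\qquad
u_1(u_1^2u_2)=\half\gd_1\gd_{12}e+\half\gd_1 z_{12}+u_1(u_2z_1).
\]
Hence the target identity is equivalent to
\[
u_1(u_2z_1)=\half\gd_1\gd_{12}e+z_1z_{12}-\half\gd_1 z_{12}.
\]

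To establish this last equation, I would apply Lemma~\ref{lem (u_1u_2)z} with $z=z_1$, obtaining $u_1(u_2z_1)+u_2(u_1z_1)=\gr e+(u_1u_2)z_1$ where $\gr=\gvp_e(u_1(u_2z_1))+\gvp_e(u_2(u_1z_1))$. By Lemma~\ref{lem u z1}, $u_1z_1=\half\gd_1u_1$, so $u_2(u_1z_1)=\half\gd_1(u_1u_2)=\half\gd_1\gd_{12}e+\half\gd_1 z_{12}$, and $(u_1u_2)z_1=z_{12}z_1=z_1z_{12}$ since $ez_1=0$. Substituting yields
\[
u_1(u_2z_1)=\bigl(\gr-\half\gd_1\gd_{12}\bigr)e-\half\gd_1 z_{12}+z_1z_{12}.
\]

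Comparison with the target formula shows that the $Z$-components match automatically; what remains is the scalar equality $\gr-\half\gd_1\gd_{12}=\half\gd_1\gd_{12}$, i.e.\ $\gvp_e(u_1(u_2z_1))=\half\gd_1\gd_{12}$. But this is exactly Lemma~\ref{lem u u}, which is the single place where the hypothesis $u_1^2\notin Z$ enters. The main obstacle is noticing that Lemma~\ref{lem (u_1u_2)z} already determines $u_1(u_2z_1)$ up to its $\ff e$-coefficient, so that Lemma~\ref{lem u u} supplies exactly what is needed to close the gap, and no further manipulation of triple products in $u_1$ is required.
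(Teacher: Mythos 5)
Your proof is correct and follows essentially the same route as the paper's: both expand $u_1^2(u_1u_2)$ and $u_1(u_1^2u_2)$ via $u_1^2=\gd_1e+z_1$ and $u_1z_1=\half\gd_1u_1$ (Lemma~\ref{lem u z1}), invoke Lemma~\ref{lem (u_1u_2)z} to relate $(u_1u_2)z_1$ to $u_1(u_2z_1)+u_2(u_1z_1)$, and reduce everything to the single scalar identity $\gvp_e(u_1(u_2z_1))=\half\gd_1\gd_{u_1u_2}$ supplied by Lemma~\ref{lem u u}, which is precisely where $u_1^2\notin Z$ is used. The only difference is cosmetic: you work with explicit $\ff e\oplus Z$ components throughout rather than with $e(\cdot)$ as in the paper.
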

\begin{proof}
Write $u_1^2=\gd_1e+z_1$. We have
\[
u_1^2(u_1u_2)=(\gd_1e+z_1)(u_1u_2)=\gd_1e(u_1u_2)+(u_1u_2)z_1,
\]
and using Lemma \ref{lem u z1},
\begin{gather*}
u_1(u_1^2u_2)=u_1((\gd_1e+z_1)u_2)=\half \gd_1(u_1u_2)+u_1(u_2z_1)\\
=u_2(u_1z_1)+u_1(u_2z_1).
\end{gather*}
Hence we need to show that
\[
\gd_1e(u_1u_2)+(u_1u_2)z_1=u_2(u_1z_1)+u_1(u_2z_1).
\]
By Lemma \ref{lem (u_1u_2)z},
\[
(u_1u_2)z_1=u_1(u_2z_1)+u_2(u_1z_1)-e(u_1(u_2z_1)+u_2(u_1z_1)),
\]
so we must show that
\[
\gd_1e(u_1u_2)=e(u_2(u_1z_1)+u_1(u_2z_1))=\half\gd_1 e(u_1u_2)+e(u_1(u_2z_1)).
\]
Or
\[
\half\gd_1e(u_1u_2)=e(u_1(u_2z_1)).
\]
But this was shown in Lemma \ref{lem u u}.
\end{proof}

\begin{lemma}\label{lem jordan z z}
Let $u\in U$ and $z\in Z,$ then
$(u^2z^2)z=(u^2z)z^2$.
\end{lemma}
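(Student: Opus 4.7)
The plan is to derive two linear relations among the four quantities
\[
L := (u^2z^2)z, \quad R := (u^2z)z^2, \quad u^2z^3, \quad (uz)^2z,
\]
and then solve to conclude $L = R$.

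First I would apply the generalization of Theorem~\ref{thm identities}(5) (valid for all $x,y\in A$ by Corollary~\ref{cor thm 1.4}(1)) with $x=u^2$ and $y=z$, obtaining $4R = u^2z^3 + L + 2((u^2z)z)z$. I then substitute Lemma~\ref{lem (uz)^2}(4), which expresses $((u^2z)z)z$ as $2(uz)^2z + \half(u^2z^2)z = 2(uz)^2z + \half L$, to arrive at
\[
\mathrm{(I)} \qquad 4R \;=\; u^2z^3 + 2L + 4(uz)^2z.
\]

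For the second relation I would apply Lemma~\ref{lem (uz)^2}(3) with $z_1=z$ and $z_2=z^2$:
\[
4(uz)(uz^2) + 2u^2z^3 \;=\; 2u(uz^3) + R + L.
\]
The quantities $R,L,u^2z^3$ live in $Z$, while $(uz)(uz^2)$ and $u(uz^3)$ live in $\ff e \oplus Z$, so I project onto $Z$. By Theorem~\ref{thm identities}(6) with $z$ replaced by $z^3$, the $Z$-part of $u(uz^3)$ is $\half u^2z^3$. For the $Z$-part of $(uz)(uz^2)$ the idea is to reapply Theorem~\ref{thm identities}(6), now with the $U$-element $uz$ playing the role of $u$: this gives $z_{(uz)((uz)z)} = \half(uz)^2z$, and combined with Theorem~\ref{thm identities}(2), which gives $(uz)((uz)z) = \half(uz)(uz^2)$, it forces $z_{(uz)(uz^2)} = (uz)^2z$. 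Projecting the displayed identity onto $Z$ therefore yields
\[
\mathrm{(II)} \qquad R + L \;=\; u^2z^3 + 4(uz)^2z.
\]

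Finally, using (II) to substitute $u^2z^3 + 4(uz)^2z = R + L$ into (I) gives $4R = 2L + (R+L) = R + 3L$, whence $3R = 3L$ and $R = L$, as desired. The only mildly subtle point is the nested application of Theorem~\ref{thm identities}(6) to the element $uz \in U$, used to identify the $Z$-part of $(uz)(uz^2)$; everything else is routine bookkeeping in the eigenspace decomposition $A = \ff e \oplus U \oplus Z$.
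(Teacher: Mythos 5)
Your intermediate steps all check out: the generalized form of Theorem \ref{thm identities}(5) does apply with $x=u^2$, $y=z$ by Corollary \ref{cor thm 1.4}(1); the reading of Lemma \ref{lem (uz)^2}(4) with $\half(u^2z^2)z$ on the left (rather than the $\half u^2z^2$ printed in the lemma, which is a typo) is the correct one and is what the paper itself uses as equation \eqref{eq lem Z 1}; and your identification of the $Z$-parts of $u(uz^3)$ and of $(uz)(uz^2)=2(uz)((uz)z)$ via Theorem \ref{thm identities}(2) and (6) is exactly the paper's own computation. The problem is the endgame. From your relations $(\mathrm{I})\colon 4R=u^2z^3+2L+4(uz)^2z$ and $(\mathrm{II})\colon R+L=u^2z^3+4(uz)^2z$ you obtain only $3R=3L$, and the standing hypothesis of the paper is merely $\charc\,\ff\ne 2$; characteristic $3$ is very much in scope (the remarks surrounding $|\ff|=3$ and $|\ff|>3$ make clear the author intends to cover it, and the hypothesis that the two identities hold strictly is non-vacuous there). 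In characteristic $3$ the equation $3R=3L$ is vacuous, so your two relations do not determine $R-L$.

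The paper sidesteps this by deriving two expressions for the single quantity $((u^2z)z)z$, namely $2(uz)^2z+\half L$ (equation \eqref{eq lem Z 1}) and $2(uz)^2z+\half R$ (equation \eqref{eq lem Z 5}), whose comparison needs only division by $2$. The key extra ingredient is equation \eqref{eq lem Z 4}, $\quar u^2z^3=\half(u^2z)z^2-(uz)^2z$, obtained by computing $(u(uz))z^2$ as the $Z$-part of $(uz)(uz^2)+u((uz)z^2)$ using Lemma \ref{lem (u_1u_2)z} and Lemma \ref{lem u(z_1z_2)}(2). If you adjoin that relation to your setup, i.e.\ $u^2z^3=2R-4(uz)^2z$, then substituting into your $(\mathrm{II})$ gives $R+L=2R$ at once, and substituting into $(\mathrm{I})$ gives $4R=2R+2L$; either way $L=R$ follows using only $\charc\,\ff\ne 2$. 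As written, your argument proves the lemma only under the additional assumption $\charc\,\ff\ne 3$, which is a genuine gap relative to the paper's hypotheses.
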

\begin{proof}
By Lemma \ref{lem (uz)^2}(4),  we have
\begin{equation}\label{eq lem Z 1}
2(uz)^2z+\half(u^2z^2)z=((u^2z)z)z.
\end{equation}

Next, by Lemma \ref{lem (uz)^2}(2), putting $u$ in place of $u_1$ and $uz$
in place of $u_2$ we get.
\[
4(uz)((uz)z)+2(u(uz))z^2=u((uz)z^2)+(uz)(uz^2)+2((u(uz))z)z.
\]
Using Theorem \ref{thm identities}(2\&6) and Lemma \ref{lem u(z_1z_2)}(2)
we get
\begin{equation}\label{eq lem Z 2}
(uz)(uz^2)+(u^2z)z^2=\half u(uz^3)+((u^2z)z)z.
\end{equation}
Note that the $Z$-part of $(uz)(uz^2)=2(uz)((uz)z)$ is $(uz)^2z,$ using Theorem
\ref{thm identities}(2\&6).  Comparing the $Z$-parts in equation \eqref{eq lem Z 2}
(using again \ref{thm identities}(6)) we see that
\begin{equation}\label{eq lem Z 3}
(uz)^2z+(u^2z)z^2=\quar u^2z^3+((u^2z)z)z.
\end{equation}
Note now that by Lemma \ref{lem (u_1u_2)z} and Lemma \ref{lem u(z_1z_2)}(2),  $(u(uz))z^2$ equals the $Z$-part of
\[
(uz)(uz^2)+u((uz)z^2)=2(uz)((uz)z)+u((uz)z^2)=2(uz)((uz)z)+\half u(uz^3).
\]
Using Theorem \ref{thm identities}(6)   we get that
\[
\half(u^2z)z^2=(uz)^2z+\quar u^2z^3.
\]
or
\begin{equation}\label{eq lem Z 4}
\quar u^2z^3=\half(u^2z)z^2-(uz)^2z.
\end{equation} 
%
Inserting equation \eqref{eq lem Z 4} in equation \eqref{eq lem Z 3} we get
%
\begin{equation}\label{eq lem Z 5} 
2(uz)^2z+\half(u^2z)z^2=((u^2z)z)z.
\end{equation}
%
Comparing equations \eqref{eq lem Z 1} and \eqref{eq lem Z 5} we get the lemma.
\end{proof}

\section{Identities between eigenspaces of ${\rm ad}_e,$ and The Jordan identity}

The purpose of this section is to prove the following Theorem.

\begin{thm}\label{thm jordan}
Let $A$ be a commutative non-associative algebra over $\ff$ and suppose
$e \in A$ is a half-axis of $A$. Then $A$ is a Jordan algebra if and only if the following identities
hold, for all elements $u, v \in A_{1/2}(e), z, z' \in A_0(e).$
\begin{enumerate}
\item
$u(u^2e)=\half u^3.$
\item
$u(zz')=(uz)z'+(uz')z.$ 
\item
$u(u^2z)=u^2(uz).$ 
\item
$u^2(zz')+2(uz)(uz') =(u^2z')z+2u((uz)z').$
\item
$(uz')z^2+2(uz)(zz') =u(z^2z')+2((uz)z')z.$
\item
$z(z^2z')=z^2(zz').$
\item
$2((uz)v)e+(uv)z=u(vz)+v(uz).$
\item
$u(u^2v)= u^2(uv).$ 
\item
$2u(v(uz))+(u^2v)z=2(uv)(uz)+u^2(vz).$
\item
$2((uz)v)z+u(vz^2)=2(uz)(vz)+(uv)z^2.$
\item
$z(z^2v)=z^2(zv).$
\end{enumerate}
\end{thm}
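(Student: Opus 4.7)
My plan is to analyze the Jordan identity
$$J(x,y) := x^2(xy) - x(x^2 y)$$
via the Peirce decomposition $A = \ff e \oplus U \oplus Z$ afforded by the half-axis $e$ (with $U = A_{1/2}(e)$, $Z = A_0(e)$). Since $J$ is cubic in $x$ and linear in $y$, writing $x = \alpha e + u + z$ and $y = \beta e + v + z'$ produces, after expansion via the fusion rules of Definition \ref{def half axis}, a sum of pieces indexed by the multi-degree in the components; I expect the eleven identities to correspond precisely to these pieces.

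For the forward direction ($A$ Jordan $\Rightarrow$ (1)--(11)), I would use $J \equiv 0$ to derive each identity by specialization. The ``diagonal'' identities arise from direct substitutions: $J(u,e)=0$ (using $ue=\half u$) yields (1); likewise $J(u,z), J(u,v), J(z,z'), J(z,v)$ yield (3), (8), (6), (11). For the ``mixed'' identities I would extract coefficients of polynomial substitutions (valid after base change to a sufficiently large field, since Jordan algebras are stable under base change). Concretely: the $\lambda\mu$-coefficient of $J(u+\lambda z+\mu z', e)$ produces (2); the $\lambda$- and $\lambda^2$-coefficients of $J(u+\lambda z, z')$ produce (4) and (5); the $\lambda$- and $\lambda^2$-coefficients of $J(u+\lambda z, v)$ produce (9) and (10); and the $\alpha\lambda$-coefficient of $J(u+\alpha e+\lambda z, v)$ produces (7), where the term $((uz)v)e = \gd_{(uz)v}e$ appears because $(uz)v \in U\cdot U$ has a nontrivial $\ff e$-component.

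For the converse direction ((1)--(11) $\Rightarrow$ $A$ Jordan), the plan is to use linearity of $J$ in $y$: decomposing $y = \beta e + v + z'$ gives
$$J(x,y) = \beta J(x,e) + J(x,v) + J(x,z'),$$
so it suffices to prove $J(x,e) = J(x,v) = J(x,z') = 0$ for every $x \in A$, $v \in U$, $z' \in Z$. The case $y=e$ is exactly Lemma \ref{lem jordan e}, whose hypotheses are identity (1) and the $z=z'$ specialization of identity (2). For the remaining two cases, I would write $x = \alpha e + u + z$, expand both sides of $J(x,y) = 0$ using the Peirce fusion rules together with $eU = \half U$ and $eZ = 0$, and then group terms by multi-degree in $(\alpha, u, z)$. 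Each group should reduce, under the fusion rules, to the difference between the two sides of one of the identities (1)--(11), and hence vanish by hypothesis. The case $y=v$ is expected to invoke (7)--(10), and the case $y=z'$ to invoke (2)--(6).

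The hard part will be the combinatorial bookkeeping in the converse direction: each of the expansions of $x^2(xv) - x(x^2v)$ and $x^2(xz') - x(x^2z')$ yields on the order of two dozen monomials, and one must partition them so that each group matches exactly one identity. A subtle feature is that $u^2$ and $uv$ are not Peirce-homogeneous but split as $\gd e + Z$-part; consequently, products such as $(u^2v)z$, $e((uv)z)$, and $u^2(vz)$ split across eigenspaces, and identities (4), (7), (9) are crafted precisely to absorb the resulting $\ff e$-contributions (for instance, the $((uz)v)e$ term in (7) captures the $e$-coefficient of $u(vz)+v(uz)-(uv)z$, which need not vanish a priori). Once the correspondence between multi-degrees and identities is tabulated, the verification reduces to routine term-matching.
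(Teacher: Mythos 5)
Your proposal is correct and follows essentially the same route as the paper: both decompose $x$ and $y$ into Peirce components relative to $e$, reduce via linearity in $y$ to the three cases $y\in\{e,v,z'\}$, pass to a large enough base field so that extracting homogeneous components of the Jordan identity $x^2(xy)=x(x^2y)$ is legitimate, and identify those components with (1)--(11). The paper packages this slightly more uniformly by taking the single generic element $x=re+su+tz$ over $\kk=\ff(r,s,t)$ and reading off all coefficients at once, but this differs from your case-by-case specializations only in bookkeeping, and like the paper you leave the same ``mildly tedious'' term-matching to the reader.
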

\begin{proof}
As noted in the introduction, since tensor products commute with direct sums, $e$ is a half-axis in
$A$ if and only if it is a half axis in $A_{\kk},$ for any field extension $\kk/\ff$.

Let $r, s, t$ be independent variables and $\kk := \ff(r, s, t)$ the corresponding rational
function field. We canonically identify $A \subseteq A_\kk$ as an $\ff$-subalgebra and, for arbitrary
elements $u \in U := A_{1/2}(e), z \in Z := A_0(e),$ consider the quantity
\[
x := re + su + tz \in A_\kk.
\]
Note that the fusion rules imply
\[
x^2 = r^2e + rsu+s^2u^2+2stuz+t^2z^2.
\]
Since Jordan algebras are stable under base change, A is a Jordan algebra if and only if $A_\kk$
is a Jordan algebra.  Hence $A$ is a Jordan algebra if and only if
for all $u, v \in U, z, z' \in Z$ and $x$ as above, the relations
\begin{align}\label{eq x(x^2e)}
x(x^2e) &= x^2(xe).\\\label{eq x(x^2z)}
x(x^2z') &= x^2(xz').\\\label{eq x(x^2u)}
x(x^2v) &= x^2(xv).
\end{align}
hold. Expanding both sides of equations \eqref{eq x(x^2e)}--\eqref{eq x(x^2u)}, viewing the result as polynomials in $r, s, t,$
over $A$, and comparing coefficients, straightforward computations, which are only mildly
tedious and are left to the reader, show that equations \eqref{eq x(x^2e)}--\eqref{eq x(x^2u)} are equivalent to
(1)--(11) of the theorem. 
\end{proof}

As a corollary we get the following theorem

\begin{thm}\label{thm jordan 2}
Let $A$ be a commutative non-associative algebra over $\ff$, $e \in A$ a half-axis, 
and suppose the identities $x^2x^2 = xx^3, x^3x^2 = xx^4$ hold strictly in $A$.
Then A is a Jordan algebra if and only if the following identities hold
for all $u_1,u_2\in U$ and $z_1,z_2\in Z:$
\begin{itemize}
\item[$(i)$]
$u_1(u_1^2u_2)=u_1^2(u_1u_2).$

\item[$(ii)$]
$(u_1^2u_2)z_1+2u_1(u_2(u_1z_1))=u_1^2(u_2z_1)+(u_1u_2)(u_1z_1).$

\item[$(iii)$]
$(u_1^2z_2)z_1+2u_1((u_1z_1)z_2)=u_1^2(z_1z_2)+2(u_1z_1)(u_1z_2).$

\item[$(iv)$]
$u_1(u_2z_1^2)+2(u_2(u_1z_1))z_1=(u_1u_2)z_1^2+2(u_1z_1)(u_2z_1).$

\item[$(v)$]
$z_1(z_1^2z_2)=z_1^2(z_1z_2).$ 

\item[$(vi)$]
$(u_1u_2)z_1+2 e(u_2(u_1z_1))=u_1(u_2z_1)+u_2(u_1z_1).$
\end{itemize}
\end{thm}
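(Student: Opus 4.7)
The strategy is to apply Theorem~\ref{thm jordan}, which characterizes $A$ being a Jordan algebra via eleven identities $(1)$--$(11)$ among elements of $\ff e,\,U,\,Z$, and then to show that under the strict hypotheses $x^2x^2=xx^3$ and $x^3x^2=xx^4$, these eleven identities collapse to the six identities $(i)$--$(vi)$. The forward direction is immediate: if $A$ is a Jordan algebra, Theorem~\ref{thm jordan} yields all of $(1)$--$(11)$, and each of $(i)$--$(vi)$ is obtained from a corresponding identity of Theorem~\ref{thm jordan} by the substitution $u=u_1,\,v=u_2,\,z=z_1,\,z'=z_2$ together with commutativity of $A$ (so, for example, $((u_1z_1)u_2)e=e(u_2(u_1z_1))$ converts $(7)$ into $(vi)$).

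For the converse, assume $A$ satisfies the strict identities and $(i)$--$(vi)$. The plan is to verify $(1)$--$(11)$ of Theorem~\ref{thm jordan} one by one. Several already follow from the strict hypothesis alone via the lemmas of Sections~4 and 6, independently of $(i)$--$(vi)$:
\begin{itemize}
\item $(1)$ is $u(u^2e)=\tfrac{1}{2}u^3$, equivalent by Lemma~\ref{lem u z1} to $u^3=\gd_{u^2}u$, which is Theorem~\ref{thm identities}(1);
\item $(2)$ is Lemma~\ref{lem u(z_1z_2)}(1);
\item $(3)$ is Lemma~\ref{lem jordan u z};
\item $(11)$ is Lemma~\ref{lem u(z_1z_2)}(2), since both $z(z^2v)$ and $z^2(zv)$ equal $\tfrac{1}{2}vz^3$;
\item $(5)$ is obtained by linearizing the identity $(uz)z^2=\tfrac{1}{2}uz^3$ of Lemma~\ref{lem u(z_1z_2)}(2) at $z$ in the direction $z'$, then rewriting the triple products using Lemma~\ref{lem u(z_1z_2)}(1) and Theorem~\ref{thm identities}(2) (in particular $((uz')z)z=\tfrac{1}{2}(uz')z^2$ and $(uz)(zz')=\tfrac{1}{2}(uz^2)z'+((uz)z')z$ from applying Lemma~\ref{lem u(z_1z_2)}(1) with $uz$ in the role of $u$).
\end{itemize}
The remaining identities correspond to $(i)$--$(vi)$: $(4)\leftrightarrow(iii)$, $(6)\leftrightarrow(v)$, $(7)\leftrightarrow(vi)$, $(8)\leftrightarrow(i)$, $(9)\leftrightarrow(ii)$, $(10)\leftrightarrow(iv)$.

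The main obstacle will be pinning down each matching exactly on the nose, since the two formulations group terms and use the commutator/fusion rules differently. The most delicate verification is that $(9)$ follows from $(ii)$: after substitution, the two differ only in the coefficient of $(u_1u_2)(u_1z_1)$, and bridging them requires the flexible-type relation of Lemma~\ref{lem u_1u_2u_3}(1) applied with $u_3:=u_1z_1\in U$, combined with Theorem~\ref{thm identities}(6) to rewrite $(u_1(u_1z_1))u_2$. The bookkeeping for $(5)$ and for this $(9)\leftrightarrow(ii)$ reduction is the only technical content; no new structural facts about half-axes beyond the Peirce fusion rules and the identities already derived in Sections~2, 4, and 6 are needed.
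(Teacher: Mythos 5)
Your overall strategy coincides with the paper's: invoke Theorem \ref{thm jordan}, match $(i)$--$(vi)$ with its identities $(8),(9),(4),(10),(6),(7)$, and derive the remaining five, namely $(1),(2),(3),(5),(11)$, from the strict hypotheses via Theorem \ref{thm identities}, Lemma \ref{lem u z1}, Lemma \ref{lem u(z_1z_2)} and Lemma \ref{lem jordan u z}. Your route to $(5)$ --- rewriting $(uz)(zz')$ by applying Lemma \ref{lem u(z_1z_2)}(1) with $uz$ in place of $u$ and then using $2((uz)z)z'=(uz^2)z'$ --- is in substance the paper's own derivation (its identity \eqref{eq*}), so that part is sound, and the other four are handled exactly as in the paper.

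The one genuine problem is your treatment of $(9)\leftrightarrow(ii)$. You correctly observed that, as printed, $(ii)$ carries $(u_1u_2)(u_1z_1)$ with coefficient $1$ while Theorem \ref{thm jordan}(9) carries coefficient $2$, the two identities otherwise agreeing term by term. But no auxiliary identity can bridge that gap: subtracting one from the other leaves precisely $(u_1u_2)(u_1z_1)=0$, which fails in a general Jordan algebra with a half-axis, so the coefficient-$1$ and coefficient-$2$ versions are not equivalent; Lemma \ref{lem u_1u_2u_3}(1) with $u_3=u_1z_1$, combined with Theorem \ref{thm identities}(6), only produces a relation among $u_1(u_2(u_1z_1))$, $(u_1u_2)(u_1z_1)$ and $(u_1^2z_1)u_2$, not the vanishing your bridge would require. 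The resolution is that the printed $(ii)$ contains a typo and should read $2(u_1u_2)(u_1z_1)$: this is confirmed by Proposition \ref{prop 2nd identity}, which states and proves ``identity $(ii)$'' in exactly that form. With the corrected coefficient, $(ii)$ is literally identity $(9)$ after renaming, no bridging argument is needed, and the paper's proof (which simply asserts the six correspondences are exact up to notation and commutativity) goes through. As written, your proposed derivation of $(9)$ from the coefficient-$1$ version would fail; note also that with coefficient $1$ the forward implication ``Jordan $\Rightarrow(ii)$'' is itself false, so the statement must be corrected rather than worked around.
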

\begin{proof}
After a change of notation, the identities $(i)$--$(vi)$  agree with identities
(8), (9), (4), (10), (6), (7) of Theorem \ref{thm jordan}. Hence Theorem \ref{thm jordan} shows that, 
if $A$ is a Jordan algebra, then $(i)$--$(vi)$ hold.

Conversely, suppose $(i)$--$(vi)$ hold. We must prove identities (1), (2),
(3), (5) and (11) of Theorem \ref{thm jordan}.
We use Theorem \ref{thm identities}.

Identity (1) of Theorem \ref{thm jordan} follows from Theorem \ref{thm identities}(1) and Lemma \ref{lem u z1},
while identity (2) is Lemma \ref{lem u(z_1z_2)}(1), and
identity (3) is  Lemma \ref{lem jordan u z}. Also identity (11) is Lemma \ref{lem u(z_1z_2)}(2).
 
Finally we prove identity (5) of Theorem \ref{thm jordan}.
We claim that
\begin{equation}\label{eq*}
2(uz)(zz')=(uz^2)z'+2((uz)z')z.
\end{equation}
Indeed using Theorem \ref{thm jordan}(2) twice we get
\[
2(uz)(zz')=2((uz)z)z'+2((uz)z')z=(uz^2)z'+2((uz)z')z.
\]
Combining identities \eqref{eq*} with Theorem \ref{thm jordan}(2), we obtain
\[
(uz')z^2 + 2(uz)(zz') = (uz')z^2 +(uz^2)z'+2((uz)z')z=u(z^2z')+2((uz)z')z,
\]
as desired. 
\end{proof}

\section{A condition for $A$ to be a Jordan algebra}\label{sect *}

Throughout this section
we assume that the identities $x^2x^2=xx^3$ and  $x^3x^2=xx^4$ hold strictly in $A$.
We consider the condition
\[\tag{$*$}
\gvp_e(u_1(u_2z))=\gvp_e(u_2(u_1z)),\text{ for all }u_1,u_2\in U\text{ and }z\in Z.
\]

In this section we will  prove:

\begin{thm}\label{thm main s8}
Assume that $(*)$ holds.  Then identities (i)--(iv)
and identity (vi) of Theorem \ref{thm jordan 2}
hold, for all $u_1,u_2\in U$ and $z_1, z_2\in Z.$
\end{thm}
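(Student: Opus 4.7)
The plan is to derive identities (i)--(iv) and (vi) by combining $(*)$ with the results of \S4 and \S6. The key observation is that $(*)$ refines Lemma \ref{lem (u_1u_2)z}: since $u_i \in U$ and $u_j z \in U$, we have $u_i(u_j z) \in U\cdot U \subseteq \ff e + Z$, so $e(u_j(u_i z)) = \gvp_e(u_j(u_i z)) e$; combining this with $(*)$ forces the scalar $\gr$ of Lemma \ref{lem (u_1u_2)z} to equal $2\gvp_e(u_2(u_1 z))$, yielding the refined form
\[
u_1(u_2 z) + u_2(u_1 z) = 2 e(u_2(u_1 z)) + (u_1 u_2) z. \qquad (\star)
\]
Identity (vi) is a rearrangement of $(\star)$. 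Identity (i) follows by a case split: if $u_1^2 \notin Z$, it is Lemma \ref{lem jordan u u}; if $u_1^2 = z_{u_1^2} \in Z$ (so $\gd_{u_1^2} = 0$), Lemma \ref{lem u z1} gives $u_1 z_{u_1^2} = 0$, so the scalar in $(\star)$ with $z = z_{u_1^2}$ vanishes, producing $u_1(u_2 z_{u_1^2}) = (u_1 u_2) z_{u_1^2}$, and both sides of (i) then expand to $z_{u_1^2} z_{u_1 u_2}$.

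For identity (ii), use Lemma \ref{lem u_1u_2u_3}(4) to rewrite $(u_1^2 u_2) z_1$; substituting into (ii) and applying $(\star)$ to replace $u_2(u_1 z_1) - (u_1 u_2) z_1$ by $-u_1(u_2 z_1) + 2\gvp_e(u_1(u_2 z_1))e$, the target collapses to
\[
-2 u_1(u_1(u_2 z_1)) + 2 \gvp_e(u_1(u_2 z_1))\, u_1 = u_1^2(u_2 z_1) - \gd_{u_1^2} u_2 z_1,
\]
which is precisely Lemma \ref{lem u_1u_2u_3}(3) with $z = z_1$, after noting $\gd_{u_1(u_2 z_1)} = \gvp_e(u_1(u_2 z_1))$ by definition.

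For (iii), first linearize Theorem \ref{thm identities}(4) at $z = z_1 + z_2$ and simplify using Theorem \ref{thm identities}(6) and Lemma \ref{lem u(z_1z_2)}(1) to obtain a symmetric identity (A). Then apply $(\star)$ to each of the triples $(u_1, u_1 z_1, z_2)$ and $(u_1, u_1 z_2, z_1)$ and subtract (the $e$-components cancel by commutativity of $(u_1 z_1)(u_1 z_2)$) to obtain an antisymmetric identity (B); subtracting (B) from (A) yields (iii). For (iv), apply $(\star)$ to the triple $(u_1 z_1, u_2, z_1)$, using $(u_1 z_1) z_1 = \half u_1 z_1^2$ from Theorem \ref{thm identities}(2) to express $2(u_2(u_1 z_1)) z_1$; substituting into the LHS of (iv) and applying $(\star)$ once more to $(u_1, u_2, z_1^2)$ reduces (iv) to the scalar equation
\[
\gvp_e(u_1(u_2 z_1^2)) = 2 \gvp_e((u_1 z_1)(u_2 z_1)),
\]
which follows by linearizing Lemma \ref{lem (uz)^2}(5) in $u$ and applying $(*)$ to the triple $(u_1, u_2, z_1^2)$.

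The main obstacle is not any single conceptual step but the bookkeeping: for each of (ii)--(iv) one must choose the correct arguments to feed to $(\star)$ and then recognize the resulting expressions among Lemmas \ref{lem u z1}, \ref{lem u(z_1z_2)}, \ref{lem u_1u_2u_3} and \ref{lem (uz)^2}. The case $u_1^2 \in Z$ in (i) is the only genuinely subtle point, since it is not covered by Lemma \ref{lem jordan u u}.
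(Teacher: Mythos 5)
Your proposal is correct and follows essentially the same route as the paper: the paper's proof is exactly the reduction to Propositions \ref{prop 1st identity}--\ref{prop 4th identity} and Lemma \ref{lem 6th identity}, each of which is proved by combining the $(*)$-refined form of Lemma \ref{lem (u_1u_2)z} (your $(\star)$, which \emph{is} identity (vi)) with Lemmas \ref{lem u z1}, \ref{lem u(z_1z_2)}, \ref{lem u_1u_2u_3}, \ref{lem (uz)^2} and \ref{lem jordan u u}, including the same case split on $u_1^2\in Z$ for (i). Your handling of (iii) (symmetric plus antisymmetric combination of the linearized square identity) and of (iv) (where in fact the $e$-components cancel outright, so the residual scalar equation you invoke is not even needed) are only minor reorganizations of the paper's computations, using the same ingredients.
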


Using Theorem \ref{thm main s8} we easily deduce:

\begin{thm}\label{thm jordan u}
Assume that $(*)$ holds.  Then $x(x^2u)=x^2(xu),$ for all $x\in A$ and $u\in U.$
\end{thm}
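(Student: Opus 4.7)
The plan is to adapt the reduction used in the proof of Theorem \ref{thm jordan} to the single Jordan-type equation \eqref{eq x(x^2u)}. To prove $x(x^2u) = x^2(xu)$ for all $x \in A$ and $u \in U$, I would pass to $\kk = \ff(r,s,t)$ and consider the generic element $x = re + su' + tz' \in A_\kk$ with $u' \in U$ and $z' \in Z$ arbitrary. Expanding both sides of $x(x^2u) = x^2(xu)$ as polynomials in $r, s, t$ and comparing the coefficients of the degree-$3$ monomials $r^a s^b t^c$ reduces the target identity to the subset of the eleven identities in Theorem \ref{thm jordan} that involve a second $U$-element---namely (7), (8), (9), (10), and (11) (with $v$ specialized to the given $u$).

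Next, I would invoke Theorem \ref{thm main s8}: under hypothesis $(*)$, it supplies identities (i), (ii), (iii), (iv), and (vi) of Theorem \ref{thm jordan 2}. The dictionary recorded in the proof of Theorem \ref{thm jordan 2} identifies these with identities (8), (9), (4), (10), and (7) of Theorem \ref{thm jordan}. This yields four of the five items on my list: (7), (8), (9), and (10).

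The remaining piece---identity (11) of Theorem \ref{thm jordan}, which asserts $z(z^2 u) = z^2(z u)$ for $u \in U$ and $z \in Z$---is just the equality $(uz^2)z = (uz)z^2$, and both sides equal $\half uz^3$ by Lemma \ref{lem u(z_1z_2)}(2). Note that this last step does not use $(*)$ at all; it is a direct consequence of the standing strict power-associativity assumption of this section.

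The main obstacle is the bookkeeping in the first step: checking coefficient-by-coefficient that the expansion of $x(x^2u) = x^2(xu)$ in $r, s, t$ reproduces exactly identities (7)--(11) of Theorem \ref{thm jordan}, with all other coefficient identities being either trivial (pure $e$-terms, handled by the fusion rules $A_1(e) = \ff e$, $eu' = \half u'$, $ez' = 0$) or already known consequences of Theorem \ref{thm identities}. This is the ``mildly tedious but straightforward'' calculation alluded to in the proof of Theorem \ref{thm jordan}, now restricted to the single equation \eqref{eq x(x^2u)} rather than all three Jordan identities.
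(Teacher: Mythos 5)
Your proposal is correct and follows essentially the same route as the paper: the paper likewise restricts the generic-element reduction of Theorem \ref{thm jordan} to the case where the test element lies in $U$ (its phrase ``take $\gb=z_2=0$, $y=u_2=u$''), so that only identities (7)--(10) of Theorem \ref{thm jordan} are needed beyond the automatic ones, and these are exactly what Theorem \ref{thm main s8} supplies, with (11) coming from Lemma \ref{lem u(z_1z_2)}(2) just as you say. Your write-up merely makes explicit the bookkeeping that the paper leaves implicit.
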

 
\begin{thm}\label{thm jordan z}
Assume that $(*)$ holds and let $z\in Z$.  Then the following are equivalent
\begin{itemize}
\item[(i)] 
$x(x^2z)=x^2(xz),$ for all $x\in A.$

\item[(ii)]
$z_1(z_1^2z)=z_1^2(z_1z),$ for all $z_1\in Z.$
\end{itemize}
\end{thm}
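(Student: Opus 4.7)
The plan is to treat the two directions separately; $(i)\Rightarrow(ii)$ is essentially immediate. Specializing $x=z_1\in Z\subseteq A$ and using $Z^2\subseteq Z$ (Definition \ref{def half axis}(4)), both $x^2=z_1^2$ and $xz=z_1z$ lie in $Z$, so both sides of (i) collapse to the two sides of (ii). The substance of the theorem lies in the converse.

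For $(ii)\Rightarrow(i)$, I would imitate the coefficient-comparison argument from the proof of Theorem \ref{thm jordan}, but keep $z$ fixed throughout. The plan is to write $x=re+su+tz_1$ with $u\in U$ and $z_1\in Z$ arbitrary and $r,s,t$ scalars (or, following the device in the proof of Theorem \ref{thm jordan}, algebraically independent indeterminates over $\ff$ after base change to $\kk:=\ff(r,s,t)$), and expand both $x(x^2z)$ and $x^2(xz)$ via the fusion rules. A routine (if tedious) computation entirely parallel to the one in the proof of Theorem \ref{thm jordan} shows that matching the coefficients of the various monomials $r^as^bt^c$ amounts to the simultaneous validity, for all $u\in U$ and $z_1\in Z$, of the specializations (with $z'$ replaced by our fixed $z$ and $z$ replaced by $z_1$) of identities $(2), (3), (4), (5),$ and $(6)$ of Theorem \ref{thm jordan}.

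It then remains to verify these five identities under the standing hypotheses plus $(*)$ and (ii). Identity $(2)$ is Lemma \ref{lem u(z_1z_2)}(1); identity $(3)$ is Lemma \ref{lem jordan u z}; identity $(4)$ coincides, after renaming variables, with identity (iii) of Theorem \ref{thm jordan 2}, which is supplied by Theorem \ref{thm main s8} --- this is precisely where the hypothesis $(*)$ is used; identity $(5)$ is deducible from identity $(2)$ by exactly the derivation leading to \eqref{eq*} in the proof of Theorem \ref{thm jordan 2}; and identity $(6)$ is precisely hypothesis (ii).

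The main obstacle is the coefficient of $s^2t$, which corresponds to identity $(4)$: this is the only one that is neither a formal consequence of strict power-associativity alone nor immediate from hypothesis (ii), and it is the place where condition $(*)$ is genuinely needed. Conveniently, the hard work has already been carried out in Theorem \ref{thm main s8}. Beyond that, the only (purely technical) risk is the polynomial bookkeeping, which though straightforward requires verifying that no further coefficients arise outside the five listed.
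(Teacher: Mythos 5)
Your proposal is correct and follows essentially the same route as the paper, whose own proof of Theorem \ref{thm jordan z} is just the remark that it is ``similar'' to that of Theorem \ref{thm jordan u}: specialize the coefficient-comparison of Theorem \ref{thm jordan} to the external element $z$, and supply the resulting identities from Lemma \ref{lem u(z_1z_2)}, Lemma \ref{lem jordan u z}, Theorem \ref{thm main s8} (for the $s^2t$ coefficient, where $(*)$ enters), the derivation of \eqref{eq*}, and hypothesis (ii). Your bookkeeping checks out (the coefficients of $r^3$, $r^2t$, $rt^2$, $r^2s$, $rs^2$ match trivially, leaving exactly the five identities you list), modulo the harmless remark that the $s^3$ coefficient yields identity (3) with the \emph{fixed} $z$ rather than with $z_1$ --- covered either way by Lemma \ref{lem jordan u z}.
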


\begin{thm}\label{thm jordan if Z}
Assume that $(*)$ above holds.
Then $A$ is a Jordan algebra if and only if $Z$ is a Jordan algebra.
\end{thm}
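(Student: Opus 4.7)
The plan is to treat the two directions separately, with the forward direction being essentially automatic and the reverse direction assembling the work already done in Theorems \ref{thm jordan u} and \ref{thm jordan z} together with Lemma \ref{lem jordan e}. Throughout, I will exploit the fact that the Jordan identity $x(x^2y)=x^2(xy)$ is linear in $y$, so it suffices to verify it for $y$ running through a spanning set of $A$, and in particular on each summand of the decomposition $A=\ff e\oplus U\oplus Z$.

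For the forward direction, suppose $A$ is a Jordan algebra. By the half-axis axioms we have $Z^2\subseteq Z$, so $Z=A_0(e)$ is a (commutative) subalgebra of $A$. The Jordan identity is a universally quantified identity, hence it is inherited by every subalgebra, and so $Z$ is itself a Jordan algebra.

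For the reverse direction, suppose $Z$ is a Jordan algebra. I will show $x(x^2y)=x^2(xy)$ for all $x\in A$ by splitting $y=\alpha e+u+z$ according to the half-axis decomposition and handling each piece. When $y=e$, Lemma \ref{lem jordan e} applies directly: its hypotheses are identities (1) and (2) of Theorem \ref{thm identities}, which hold because we are assuming $x^2x^2=xx^3$ and $x^3x^2=xx^4$ hold strictly in $A$. When $y=u\in U$, Theorem \ref{thm jordan u} yields $x(x^2u)=x^2(xu)$ at once, using the standing hypothesis $(*)$. When $y=z\in Z$, Theorem \ref{thm jordan z} reduces the identity $x(x^2z)=x^2(xz)$ to the verification that $z_1(z_1^2z)=z_1^2(z_1z)$ for all $z_1\in Z$, and this is precisely the Jordan identity inside $Z$, which holds by assumption. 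Combining the three cases via linearity in $y$ establishes the Jordan identity on all of $A$.

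The step that I expect to carry the real content is recognizing that condition (ii) of Theorem \ref{thm jordan z} is exactly the Jordan identity intrinsic to $Z$; once this observation is in place, the proof is just a matter of collating previous results. There is no genuine obstacle beyond ensuring that every ingredient (strict validity of the two power-associativity identities, and the condition $(*)$) is indeed in force, which it is throughout this section.
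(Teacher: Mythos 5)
Your proof is correct and follows essentially the same route as the paper: the paper deduces the theorem from Theorem \ref{thm main s8} together with Theorem \ref{thm jordan 2}, where the only identity not supplied by the standing hypotheses is the purely $Z$-internal one $z_1(z_1^2z_2)=z_1^2(z_1z_2)$, i.e.\ exactly the statement that $Z$ is a Jordan algebra. Your decomposition of $y$ into its $e$-, $U$- and $Z$-components, handled respectively by Lemma \ref{lem jordan e}, Theorem \ref{thm jordan u} and Theorem \ref{thm jordan z}, is just an explicit unpacking of that same reduction (it is the decomposition underlying Theorem \ref{thm jordan} in the first place), and your forward direction via the subalgebra $Z\subseteq A$ is the intended one.
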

 
\noindent
In the remainder of this section we assume that $(*)$ holds.

\begin{remark}
One of our motivation for hypothesis $(*)$ above is that it holds
in any primitive axial algebra of Jordan type half (not necessarily power associative).
Indeed, suppose that $A$ is such an algebra.
By \cite[Theorem 4.1]{hss2} $A$ admits a Frobenius form
$(\cdot\, ,\, \cdot)$ such that $(f, x)=\gvp_f(x),$
for all half-axes $f\in A$.  Hence
\[
\gvp_e(u_1(u_2z))=(e,u_1(u_2z))=(eu_1,u_2z)=\half (u_1,u_2z)=\half(u_1u_2,z).
\]
By symmetry, $\gvp_e(u_2(u_1z))=\half(u_1u_2,z),$ so indeed $(*)$ holds in $A.$
\end{remark}

\begin{prop}\label{prop 1st identity}
Identity (i) of Theorem \ref{thm jordan 2} holds, namely
\[
u_1(u_1^2u_2)=u_1^2(u_1u_2),
\]
for all $u_1, u_2\in U$.
\end{prop}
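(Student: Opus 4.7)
The plan is to mimic the structure of the proof of Lemma~\ref{lem jordan u u}, which established the same identity under the extra assumption $u_1^2 \notin Z$, and observe that hypothesis $(*)$ is exactly what is needed to remove that assumption. Concretely, I would write $u_1^2 = \gd_1 e + z_1$ with $\gd_1 = \gd_{u_1^2}$, $z_1 = z_{u_1^2}$, and also $u_1 u_2 = \gd_{12} e + z_{12}$, then expand both sides of the desired identity explicitly.

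On the right side, using $e \cdot (u_1 u_2) = \gd_{12} e$ one gets
\[
u_1^2 (u_1 u_2) = \gd_1 \gd_{12} e + z_1(u_1 u_2).
\]
On the left side, distributing and using $eu_2 = \half u_2$ gives
\[
u_1(u_1^2 u_2) = \half \gd_1 (u_1 u_2) + u_1(u_2 z_1) = \half \gd_1 \gd_{12} e + \half \gd_1 z_{12} + u_1(u_2 z_1).
\]
The next step is to rewrite $z_1(u_1 u_2) = (u_1 u_2) z_1$ via Lemma~\ref{lem (u_1u_2)z} as
\[
(u_1 u_2) z_1 = u_1(u_2 z_1) + u_2(u_1 z_1) - \gr e,
\]
where $\gr = \gvp_e(u_1(u_2 z_1)) + \gvp_e(u_2(u_1 z_1))$. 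Substituting $u_1 z_1 = \half \gd_1 u_1$ (from Lemma~\ref{lem u z1}) yields $u_2(u_1 z_1) = \half \gd_1 (u_1 u_2) = \half \gd_1 \gd_{12} e + \half \gd_1 z_{12}$, and in particular $\gvp_e(u_2(u_1 z_1)) = \half \gd_1 \gd_{12}$.

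Plugging back, the two sides of the proposed identity differ by exactly $(\gd_1 \gd_{12} - \gr) e$. Hypothesis $(*)$ gives $\gvp_e(u_1(u_2 z_1)) = \gvp_e(u_2(u_1 z_1)) = \half \gd_1 \gd_{12}$, whence $\gr = \gd_1 \gd_{12}$, and the difference vanishes. There is no real obstacle here: the content of the proof is exactly the observation that hypothesis $(*)$ supplies the $\ff e$-coefficient of $u_1(u_2 z_1)$ that, in Lemma~\ref{lem jordan u u}, required the detour through Lemma~\ref{lem u u} (and its side condition $u_1^2 \notin Z$). So the bookkeeping is short and the result follows at once.
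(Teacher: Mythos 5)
Your computation is correct: every step checks out (the decompositions $u_1^2=\gd_1e+z_1$, $u_1u_2=\gd_{12}e+z_{12}$, the identity $u_1z_1=\half\gd_1u_1$ from Lemma \ref{lem u z1} via Theorem \ref{thm identities}(1), the use of Lemma \ref{lem (u_1u_2)z}, and the final cancellation $(\gd_1\gd_{12}-\gr)e=0$ once $(*)$ forces $\gr=\gd_1\gd_{12}$). The route is genuinely different from the paper's, which splits into cases: for $u_1^2\notin Z$ it simply cites Lemma \ref{lem jordan u u}, and for $u_1^2\in Z$ it notes $\gd_1=0$, hence $u_1^3=0$ by Theorem \ref{thm identities}(1), so that $(*)$ gives $\gvp_e(u_1(u_2u_1^2))=\gvp_e(u_2(u_1u_1^2))=0$ and Lemma \ref{lem (u_1u_2)z} collapses to $(u_1u_2)u_1^2=u_1(u_2u_1^2)$. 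The reason the paper needs the case split is that Lemma \ref{lem jordan u u} rests on Lemma \ref{lem u u}, whose proof divides by $\gd_1$ and therefore carries the side condition $u_1^2\notin Z$; your argument sidesteps this by extracting the scalar $\gvp_e(u_1(u_2z_1))=\half\gd_1\gd_{12}$ directly from $(*)$ rather than from Lemma \ref{lem (uz)^2}(6), so the same bookkeeping works uniformly whether or not $\gd_1=0$. What the paper's version buys is brevity by reusing already-proved lemmas; what yours buys is a single self-contained computation that makes transparent exactly where, and only where, hypothesis $(*)$ enters. Either is acceptable.
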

\begin{proof}
If $u_1^2\notin Z,$ this is Lemma \ref{lem jordan u u}.  So suppose $u_1^2\in Z$.
Then $\gd_{u_1^2}=0,$ so, by Theorem \ref{thm identities}(1), $u_1^3=0$.
By $(*)$ we have $\gvp_e(u_1(u_2u_1^2))=\gvp_e(u_2(u_1u_1^2))=0$.  Thus $u_1(u_2u_1^2)\in Z$.
By Lemma \ref{lem (u_1u_2)z}, 
\[
(u_1u_2)u_1^2=u_1(u_2u_1^2)+u_2(u_1u_1^2)=u_1(u_2u_1^2).\qedhere
\]
\end{proof}

\begin{prop}\label{prop 2nd identity}
Identity (ii) of Theorem \ref{thm jordan 2} holds, namely
\[
(u_1^2u_2)z+2(u_1(u_2(u_1z))= u_1^2(u_2z)+2 (u_1u_2)(u_1z),
\]
for all $u_1, u_2\in U$ and $z\in Z$.
\end{prop}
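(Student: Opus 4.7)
The plan is to reduce the desired identity to a relation between the $\gvp_e$-components that is precisely hypothesis $(*)$, by peeling off the two occurrences of $(u_1u_2)z$ on the two sides using the lemmas of Sections 4--5.

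First I would apply Lemma~\ref{lem u_1u_2u_3}(4), which gives
\[
(u_1^2u_2)z=-2u_1((u_1u_2)z)+2(u_1u_2)(u_1z)+\gd_{u_1^2}u_2z.
\]
Substituting this into the left-hand side of the target identity cancels the $2(u_1u_2)(u_1z)$ term on the right, so the proposition is equivalent to
\[
-2u_1((u_1u_2)z)+\gd_{u_1^2}u_2z+2u_1(u_2(u_1z))=u_1^2(u_2z).
\]
Next I would use Lemma~\ref{lem (u_1u_2)z} in the form $(u_1u_2)z=u_1(u_2z)+u_2(u_1z)-\gr e$, where $\gr=\gd_{u_1(u_2z)}+\gd_{u_2(u_1z)}$, and multiply both sides by $u_1$ to obtain
\[
u_1((u_1u_2)z)=u_1(u_1(u_2z))+u_1(u_2(u_1z))-\tfrac{1}{2}\gr\,u_1.
\]
Inserting this into the reduced identity causes the $u_1(u_2(u_1z))$ term to drop, leaving
\[
u_1^2(u_2z)+2u_1(u_1(u_2z))=\gd_{u_1^2}u_2z+\gr\, u_1.
\]

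The left-hand side is handled by Lemma~\ref{lem u_1u_2u_3}(3), which evaluates it as $\gd_{u_1^2}u_2z+2\gd_{u_1(u_2z)}u_1$. The desired equality therefore reduces to $\gr\, u_1=2\gd_{u_1(u_2z)}u_1$, i.e.\ (assuming $u_1\ne 0$, the case $u_1=0$ being trivial) to
\[
\gd_{u_1(u_2z)}+\gd_{u_2(u_1z)}=2\gd_{u_1(u_2z)},
\]
which is exactly $(*)$. No part of this argument looks like a genuine obstacle; the only subtlety is keeping careful track of the scalars $\gd_{u_1(u_2z)}$ and $\gd_{u_2(u_1z)}$ introduced by Lemma~\ref{lem (u_1u_2)z} so that one sees that they combine to give $\gr$, and that in the end the entire proposition collapses to the symmetry hypothesis $(*)$.
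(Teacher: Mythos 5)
Your proposal is correct and follows essentially the same route as the paper's proof: both rest on Lemma~\ref{lem u_1u_2u_3}(4) to expand $(u_1^2u_2)z$, Lemma~\ref{lem (u_1u_2)z} together with $(*)$ to rewrite $(u_1u_2)z$, and Lemma~\ref{lem u_1u_2u_3}(3) to evaluate $u_1^2(u_2z)+2u_1(u_1(u_2z))$; the only difference is the bookkeeping order, since you carry the scalar $\gr$ to the end while the paper invokes $(*)$ earlier to replace it by $2\gd_{u_2(u_1z)}$. (The aside about $u_1\ne 0$ is unnecessary: you only need the implication from $(*)$ to $\gr u_1=2\gd_{u_1(u_2z)}u_1$, which holds in all cases.)
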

\begin{proof}
Let $\gd_1:=\gd_{u_1^2}$. We have
\begin{align*}
&(u_1^2u_2)z+2u_1(u_2(u_1z))\\
&\overset{(i)}{=}-2u_1((u_1u_2)z)+2(u_1u_2)(u_1z)+2u_1(u_2(u_1z))+\gd_1u_2z\\
&=2u_1(-(u_1u_2)z+u_2(u_1z))+2(u_1u_2)(u_1z)+\gd_1u_2z\\
&\overset{(ii)}{=}2u_1(-u_1(u_2z)+2\gd_{u_2(u_1z)}e)+2(u_1u_2)(u_1z)+\gd_1u_2z\\
&=-2u_1(u_1(u_2z))+2(u_1u_2)(u_1z)+2\gd_{u_2(u_1z)}u_1+\gd_1u_2z\\
&\overset{(iii)}{=}u_1^2(u_2z)-\gd_1u_2z-2\gd_{u_2(u_1z)}u_1+2(u_1u_2)(u_1z)+2\gd_{u_2(u_1z)}u_1+\gd_1u_2z\\
&=u_1^2(u_2z)+2(u_1u_2)(u_1z).
\end{align*}
Where equality (i) comes from Lemma \ref{lem u_1u_2u_3}(4), equality (ii) comes from
Lemma \ref{lem (u_1u_2)z} and $(*)$.  Finally, equality (iii)
comes from Lemma \ref{lem u_1u_2u_3}(3).
\end{proof}

\begin{prop}\label{prop 3rd identity}
Identity (iii) of Theorem \ref{thm jordan 2} holds, namely
\[
(u_1^2z_2)z_1+2u_1((u_1z_1)z_2)=u_1^2(z_1z_2)+2(u_1z_1)(u_1z_2),
\]
for all $u_1\in U$ and $z_1, z_2\in Z$.
\end{prop}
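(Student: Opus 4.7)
My plan is to apply Lemma~\ref{lem (u_1u_2)z} with the element $u_1z_1 \in U$ in the role of $u_2$ and with $z := z_2$. This yields
\[
u_1((u_1z_1)z_2) + (u_1z_1)(u_1z_2) = \rho\, e + (u_1(u_1z_1))z_2,
\]
where $\rho = \gvp_e(u_1((u_1z_1)z_2)) + \gvp_e((u_1z_1)(u_1z_2))$. Hypothesis $(*)$ (taken with $u := u_1$, $v := u_1z_1$, $z := z_2$) forces both summands of $\rho$ to equal $\gd_{(u_1z_1)(u_1z_2)}$, so $\rho = 2\gd_{(u_1z_1)(u_1z_2)}$. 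At the same time, Theorem~\ref{thm identities}(6) applied to $u_1$ and $z_1$ gives $u_1(u_1z_1) = \gvp_e(u_1(u_1z_1))e + \half u_1^2z_1$, whence $(u_1(u_1z_1))z_2 = \half(u_1^2z_1)z_2$ (the $e$-part is annihilated by $z_2 \in Z$). Doubling the displayed equation and expanding $2\rho\,e = 4(u_1z_1)(u_1z_2) - 4 z_{(u_1z_1)(u_1z_2)}$ produces
\[
2u_1((u_1z_1)z_2) = 2(u_1z_1)(u_1z_2) + (u_1^2z_1)z_2 - 4 z_{(u_1z_1)(u_1z_2)}.
\]

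Adding $(u_1^2z_2)z_1$ to both sides reduces the desired identity to the auxiliary equality
\[
(u_1^2z_1)z_2 + (u_1^2z_2)z_1 - u_1^2(z_1z_2) = 4 z_{(u_1z_1)(u_1z_2)}.
\]
To prove this I would apply Lemma~\ref{lem (uz)^2}(3) to rewrite the first two summands on the left in terms of $(u_1z_1)(u_1z_2)$, $u_1^2(z_1z_2)$, and $u_1(u_1(z_1z_2))$, and then simplify $2u_1(u_1(z_1z_2))$ via Theorem~\ref{thm identities}(6) as $2\gvp_e(u_1(u_1(z_1z_2)))e + u_1^2(z_1z_2)$. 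After rearrangement the left-hand side sits in $Z$, so reading off $Z$-parts delivers exactly the required equality; the $e$-parts cancel automatically because Lemma~\ref{lem u(z_1z_2)}(1) combined with a second invocation of $(*)$ gives $\gvp_e(u_1(u_1(z_1z_2))) = 2\gd_{(u_1z_1)(u_1z_2)}$.

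The main obstacle is careful bookkeeping of the Peirce decomposition $A = \ff e \oplus U \oplus Z$: the intermediate elements $u_1((u_1z_1)z_2)$ and $(u_1z_1)(u_1z_2)$ both lie in $\ff e + Z$, and the whole strategy hinges on using $(*)$ to pin down their $e$-components. Hypothesis $(*)$ enters at two places, and without it the scalar $\rho$ would remain uncomputed and the reduction to the $Z$-part auxiliary would collapse.
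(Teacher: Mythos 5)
Your proof is correct, and every step checks out: Lemma~\ref{lem (u_1u_2)z} applies because $u_1z_1\in U$, hypothesis $(*)$ with $u_2:=u_1z_1$ does pin down $\rho=2\gd_{(u_1z_1)(u_1z_2)}$, and the reduction to the auxiliary identity $(u_1^2z_1)z_2+(u_1^2z_2)z_1-u_1^2(z_1z_2)=4z_{(u_1z_1)(u_1z_2)}$ is exact; that identity then follows from Lemma~\ref{lem (uz)^2}(3) together with Theorem~\ref{thm identities}(6) by reading off $Z$-parts, as you say. The paper's proof draws on the same toolbox but is organized differently: it first converts $(u_1^2z_2)z_1$ into $2(u_1(u_1z_2))z_1$ via Theorem~\ref{thm identities}(6), applies Lemma~\ref{lem (u_1u_2)z} to that term, and then merges $u_1((u_1z_2)z_1)+u_1((u_1z_1)z_2)$ into $u_1(u_1(z_1z_2))$ using the derivation-type identity of Lemma~\ref{lem u(z_1z_2)}(1), finishing with the scalar identity $\gd_{u(u(z_1z_2))}=2\gd_{(uz_1)(uz_2)}$ of Lemma~\ref{lem (uz)^2}(7). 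Your route instead isolates a symmetric identity purely in $Z$ and invokes the polarized square identity Lemma~\ref{lem (uz)^2}(3), which the paper does not use here; a small advantage of your version is that the $e$-components cancel for free once the left side is seen to lie in $Z$, so the scalar identity is only a consistency check rather than a needed ingredient, whereas the paper's chain genuinely requires it at the last step. Both proofs lean on $(*)$, Lemma~\ref{lem (u_1u_2)z}, and Theorem~\ref{thm identities}(6) in the same essential way.
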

\begin{proof}
We use Theorem \ref{thm identities}(6) and Lemma \ref{lem (u_1u_2)z}:
\begin{align*}
&(u_1^2z_2)z_1+2u_1((u_1z_1)z_2)=2(u_1(u_1z_2))z_1+2u_1((u_1z_1)z_2)\\ 
&=2(u_1z_1)(u_1z_2)+2u_1((u_1z_2)z_1)+2u_1((u_1z_1)z_2)-2\gd_{u_1((u_1z_2)z_1)}e-2\gd_{(u_1z_1)(u_1z_2)}e\\
&= 2(u_1z_1)(u_1z_2)+2u_1(u_1(z_1z_2))-2\gd_{u_1((u_1z_2)z_1)}e-2\gd_{(u_1z_1)(u_1z_2)}e\\
&=2(u_1z_1)(u_1z_2)+u_1^2(z_1z_2)+2\gd_{u_1(u_1(z_1z_2))}e-2\gd_{u_1((u_1z_2)z_1)}e-2\gd_{(u_1z_1)(u_1z_2)}e.\\
\end{align*}
Note now that by $(*),$ 
\[
\gd_{u_1((u_1z_2)z_1)}e=\gd_{(u_1z_1)(u_1z_2)}.
\]
Hence we get
\begin{gather*}
(u_1^2z_2)z_1+2u_1((u_1z_1)z_2)\\
=2(u_1z_1)(u_1z_2)+u_1^2(z_1z_2)+2\gd_{u_1(u_1(z_1z_2))}e-4\gd_{(u_1z_1)(u_1z_2)}e.
\end{gather*}
But by Lemma \ref{lem (uz)^2}(6), $2\gd_{u_1(u_1(z_1z_2))}=4\gd_{(u_1z_1)(u_1z_2)},$
so we are done.
\end{proof}

\begin{prop}\label{prop 4th identity}
Identity (iv) of Theorem \ref{thm jordan 2} holds, namely
\[
u_1(u_2z^2)+2(u_2(u_1z))z=(u_1u_2)z^2+2(u_1z)(u_2z),
\]
for all $u_1, u_2\in U$ and $z\in Z$.
\end{prop}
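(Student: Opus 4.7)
The plan is to rewrite both sides of identity (iv) using Lemma \ref{lem (u_1u_2)z} so that the task reduces to an identity of $\ff e$-parts which is then read off from Lemma \ref{lem (uz)^2}(6) together with hypothesis $(*)$.

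First, I apply Lemma \ref{lem (u_1u_2)z} to the triple $(u_2,\,u_1z,\,z)$, which is legitimate since $u_1z\in U$ by the fusion rules. Combined with Theorem \ref{thm identities}(2), which gives $(u_1z)z=\half u_1z^2$, this yields
\[
(u_2(u_1z))z \;=\; \half u_2(u_1z^2) + (u_1z)(u_2z) - \gr_2\, e,
\]
where $\gr_2 := \gvp_e(u_2((u_1z)z)) + \gvp_e((u_1z)(u_2z))$. Substituting this expression for $2(u_2(u_1z))z$ into the left-hand side of identity (iv) cancels the term $2(u_1z)(u_2z)$ on both sides and reduces the goal to
\[
u_1(u_2z^2) + u_2(u_1z^2) \;=\; (u_1u_2)z^2 + 2\gr_2\, e.
\]

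Second, I apply Lemma \ref{lem (u_1u_2)z} again, this time with the triple $(u_1,\,u_2,\,z^2)$, noting $z^2\in Z$. Hypothesis $(*)$ (used with $z^2$ in place of $z$) gives $\gvp_e(u_1(u_2z^2)) = \gvp_e(u_2(u_1z^2))$; call this common scalar $\gd$. The lemma then delivers
\[
u_1(u_2z^2) + u_2(u_1z^2) \;=\; 2\gd\, e + (u_1u_2)z^2.
\]
Comparing with the reduced target, it remains to prove $\gd = \gr_2$. Using Theorem \ref{thm identities}(2) and $(*)$ to simplify $\gr_2 = \half\gd + \gvp_e((u_1z)(u_2z))$, this becomes the scalar identity $\gd = 2\gvp_e((u_1z)(u_2z))$.

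Finally, this last identity is exactly Lemma \ref{lem (uz)^2}(6), which reads $4\gvp_e((u_1z)(u_2z))=\gvp_e(u_1(u_2z^2))+\gvp_e(u_2(u_1z^2))$; the right-hand side equals $2\gd$ by $(*)$, closing the argument. The only nontrivial step is spotting the two applications of Lemma \ref{lem (u_1u_2)z}; the remainder is bookkeeping of $\ff e$-parts, with $(*)$ invoked precisely to identify the two $\gd$-values and to ensure $\gr_3 = 2\gd$ in the second application.
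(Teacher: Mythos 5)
Your proof is correct and takes essentially the same route as the paper's: both rest on two applications of Lemma \ref{lem (u_1u_2)z} (to the triples $(u_2,\,u_1z,\,z)$ and $(u_1,\,u_2,\,z^2)$), the relation $2(uz)z=uz^2$, and $(*)$ for the $\ff e$-part bookkeeping. The only cosmetic difference is that you close the final scalar identity $\gvp_e(u_1(u_2z^2))=2\gvp_e((u_1z)(u_2z))$ via Lemma \ref{lem (uz)^2}(6), whereas the paper obtains it directly from $(*)$ applied to the pair $(u_1,\,u_2z)$; also, the ``$\gr_3$'' in your last sentence is evidently a typo for $\gr_2$.
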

\begin{proof}
We have
\begin{align*}
&u_1(u_2z^2)+2(u_2(u_1z))z\\
&\overset{(i)}{=}(u_1u_2)z^2-u_2(u_1z^2)+2\gd_{u_1(u_2z^2)}e+2(u_2(u_1z))z\\
&\overset{(ii)}{=}(u_1u_2)z^2-2u_2((u_1z)z)+2\gd_{u_1(u_2z^2)}e+2(u_2(u_1z))z\\
&\overset{(iii)}{=}(u_1u_2)z^2-2\Big((u_2(u_1z))z-(u_1z)(u_2z)+2\gd_{(u_1z)(u_2z)}e\Big)\\
&+2\gd_{u_1(u_2z^2)}e+2(u_2(u_1z))z\\
&=(u_1u_2)z^2+2(u_1z)(u_2z)-4\gd_{(u_1z)(u_2z)}e+2\gd_{u_1(u_2z^2)}e\\
&\overset{(iv)}{=}(u_1u_2)z^2+2(u_1z)(u_2z)-4\gd_{(u_1z)(u_2z)}e+4\gd_{u_1((u_2z)z)}e\\
&\overset{(v)}{=}(u_1u_2)z^2+2(u_1z)(u_2z).
\end{align*}
Where equalities $(i)$ and $(iii)$ come from Lemma \ref{lem (u_1u_2)z} (and $(*)$),
equalities $(ii)$ and $(iv)$ come from the fact that $2(uz)z=uz^2,$
and equality $(v)$ comes from $(*)$.
\end{proof}

\begin{lemma}\label{lem 6th identity}
Identity (vi) of Theorem \ref{thm jordan 2} holds, namely
\[
(u_1u_2)z+2e(u_2(u_1z))=u_1(u_2z)+u_2(u_1z),
\]
for all $u_1, u_2\in U$ and $z\in Z$.
\end{lemma}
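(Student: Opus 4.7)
The plan is to derive this identity essentially for free by combining Lemma \ref{lem (u_1u_2)z} with hypothesis $(*)$, together with a quick observation about how $e$ acts on products of two elements of $U$.

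First I would invoke Lemma \ref{lem (u_1u_2)z}, which gives
\[
u_1(u_2z)+u_2(u_1z)=\gr e+(u_1u_2)z,\qquad \gr=\gvp_e(u_1(u_2z))+\gvp_e(u_2(u_1z)).
\]
Under the standing assumption $(*)$, the two $\gvp_e$-values coincide, so $\gr=2\gvp_e(u_2(u_1z))$. To match this with the left-hand side $2e(u_2(u_1z))$ that appears in the statement, I would observe that by the fusion rules $u_1z\in U$ (since $UZ\subseteq U$), and hence $u_2(u_1z)\in U^2\subseteq \ff e+Z$. Consequently $u_2(u_1z)$ has no $U$-component, and multiplication by $e$ simply picks out its scalar part:
\[
e(u_2(u_1z))=\gvp_e(u_2(u_1z))\,e.
\]
Thus $2e(u_2(u_1z))=\gr e$, and rearranging the displayed equation from Lemma \ref{lem (u_1u_2)z} yields exactly
\[
(u_1u_2)z+2e(u_2(u_1z))=u_1(u_2z)+u_2(u_1z).
\]

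There is no real obstacle here: the lemma is a direct reformulation of Lemma \ref{lem (u_1u_2)z} under hypothesis $(*)$, where the only subtle point is recognizing that the scalar correction term $\gr e$ produced by Lemma \ref{lem (u_1u_2)z} is literally $2e(u_2(u_1z))$ because $u_2(u_1z)$ lies in $\ff e+Z$ by the fusion rules. This is why the statement is phrased with $e(u_2(u_1z))$ rather than a $\gvp_e$: it hides the use of $(*)$ inside the expression $e(u_2(u_1z))$, which by symmetry (via $(*)$) equals $e(u_1(u_2z))$.
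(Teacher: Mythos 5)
Your proof is correct and follows exactly the paper's (very terse) argument: the paper likewise derives the identity immediately from Lemma \ref{lem (u_1u_2)z} together with hypothesis $(*)$. Your additional observation that $u_2(u_1z)\in U^2\subseteq \ff e+Z$, so that $2e(u_2(u_1z))=\gr e$, is precisely the detail the paper leaves implicit.
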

\begin{proof}
This is immediate from $(*)$ and Lemma \ref{lem (u_1u_2)z}.
\end{proof}

\begin{proof}[Proofs of Theorems \ref{thm main s8}, \ref{thm jordan u}, \ref{thm jordan z} and \ref{thm jordan if Z}]\hfill
\medskip

\noindent
Theorem  \ref{thm main s8}  is immediate from Theorem \ref{thm jordan 2} and
Propositions \ref{prop 1st identity}, \ref{prop 2nd identity}, \ref{prop 3rd identity}, \ref{prop 4th identity},
and Lemma \ref{lem 6th identity}.  Furthermore these propositions and Lemma show that if we
take  $\gb=z_2=0$ in Theorem \ref{thm jordan 2}, then all identities in that proposition are satisfied,
for all $u_1, u_2\in U$ and $z_1\in Z,$
so   Theorem \ref{thm jordan 2}
completes the proof of Theorem \ref{thm jordan u} (taking $y=u_2=u$).  The proof of Theorem \ref{thm jordan z} is similar.
Finally Theorem \ref{thm jordan if Z} is immediate from Theorem \ref{thm main s8} and Theorem \ref{thm jordan 2}.
\end{proof}
\medskip

\noindent
{\bf Acknowledgement.}
The referee report of this paper can be considered as an article
on its own right.  It upgraded the level of this paper and improved
the proofs in many parts. For example Theorem \ref{thm jordan}
is due to the referee. The author thinks that there is only
one mathematician in the world (in areas related to this paper) 
that could produce such a fantastic report.
\noindent


\end{document}